\font\emailfont=cmtt10
\newcommand\commentable[1]{#1}
\newtheorem{theorem}{Theorem}[section]
\newtheorem{prop}[theorem]{Proposition}
\newtheorem{cor}[theorem]{Corollary}
\newtheorem{conj}[theorem]{Conjecture}
\newtheorem{lemma}[theorem]{Lemma}
\newtheorem{claim}[theorem]{Claim}
\newtheorem{defn}[theorem]{Definition}
\newtheorem{remark}[theorem]{Remark}
\def\endproof{\relax\ifmmode\expandafter\endproofmath\else
  \unskip\nobreak\hfil\penalty50\hskip.75em\hbox{}\nobreak\hfil\bull
  {\parfillskip=0pt \finalhyphendemerits=0 \bigbreak}\fi}
\def\endproofmath$${\eqno\bull$$\bigbreak}
\def\bull{\vbox{\hrule\hbox{\vrule\kern3pt\vbox{\kern6pt}\kern3pt\vrule}\hrule}}
\newcommand{\Q}{\mathbb{Q}}
\newcommand{\C}{\mathbb{C}}
\newcommand{\Z}{\mathbb{Z}}
\newcommand{\D}{\mathbb{D}}
\newcommand{\cm}{\cdot}
\newcommand{\F}{\mathbb F}
\newcommand\RelSpinC{\underline{\mathrm{Spin}}^c}
\newcommand\RelSpinCYK{\RelSpinC(Y,K)}
\newcommand\relspinc{\underline{\spinc}}
\newcommand\Filt{\mathcal F}
\newcommand\x{\mathbf x}
\newcommand\ModFlow{\mathcal M}
\newcommand\ModSphere{\ModFlow\left({\mathbb S}\longrightarrow 
\Sym^{g-1}(\Sigma_{1})\times \Sym^2(\Sigma_{2})\right)}
\newcommand\ModSpheres\ModSphere
\newcommand\CFa{\widehat{CF}}
\newcommand\CFinf{CF^\infty}
\newcommand\HFa{\widehat{HF}}
\newcommand\UnparModSp{\widehat \ModSp}
\newcommand\UnparModFlow\UnparModSp
\newcommand\Mod\ModSp
\newcommand{\spinc}{\mathfrak s}
\newcommand\ModMaps{\mathcal M}
\newcommand\ModSp\ModMaps
\newcommand\Ta{{\mathbb T}_{\alpha}}
\newcommand\Tb{{\mathbb T}_{\beta}}
\newcommand\spincrel\relspinc
\newcommand\CFK{CFK}
\newcommand\HFK{HFK}
\newcommand\CFKa{\widehat\CFK}
\newcommand\CFKinf{\CFK^{\infty}}
\newcommand\HFKa{\widehat\HFK}
\newcommand\Dual{\mathcal D}
\newcommand\Duality\Dual
\newcommand\ons{Ozsv{\'a}th and Szab{\'o}}
\newcommand\os{Ozsv{\'a}th-Szab{\'o}}
\newcommand\WD{$D_+(K,p)$}
\newcommand\WDnm{D_+(K,p)}
\newcommand\HFDnm{\HFKa(D_+(K,p),1)}
\newcommand\HFMnm{\underset{\{\relspinc\in \RelSpinCY\}}\bigoplus \!\!\!\!  \!\!\!\! \!\!\!\! \HFKa(S^3_p(K),K',\relspinc)}
\newcommand\RelSpinCY{\RelSpinC(S^3_p(K),K')}
\newcommand\Surg{$S^3_p(K)$}
\newcommand\Surgnm{S^3_p(K)}
\newcommand\kind{$(L(p,q),K')$}
\newcommand\kindnm{L(p,q),K'}
\newcommand\Cij[2]{C\{i{#1},j{#2}\}}
\newcommand\Ci[1]{C\{i{#1}\}}
\newcommand\SpinC{\mathrm{Spin}^c}
\newcommand\filtY[1]{ {\Filt}(K,{#1})}
\newcommand\Sym{\mathrm{Sym}}
\title[{On Floer homology and the Berge conjecture}]{On Floer homology and the Berge conjecture on knots admitting lens space surgeries}
\author[Matthew Hedden]{Matthew Hedden}
\address{Department of
Mathematics, Massachusetts Institute of Technology \newline
\indent{\emailfont{mhedden@math.mit.edu}}}
\begin{document}

\begin{abstract}
We complete the first step in a two-part program proposed by Baker, Grigsby, and the author to prove that Berge's construction of knots in the three-sphere which admit lens space surgeries is complete.  The first step, which we prove here, is to show that a knot in a lens space with a three-sphere surgery has simple (in the sense of rank) knot Floer homology.  The second (conjectured) step involves showing that, for a fixed lens space, the only knots with simple Floer homology belong to a simple finite family.  Using results of Baker, we provide evidence for the conjectural part of the program by showing that it holds for a certain family of knots. Coupled with work of Ni, these knots provide the first infinite family of non-trivial knots which are characterized by their knot Floer homology.  As another application, we provide a Floer homology proof of a theorem of Berge.


\end{abstract}

\maketitle
\section{Introduction}
\label{sec:intro}
On which knots in the three-sphere can one perform Dehn surgery and obtain a lens space?  This question has received considerable attention in recent years \cite{Berge,Bleiler,AbsGrad,Lens,Goda1,Ras2,Baker1,Baker2} and much progress has been made towards a general method of enumeration of such knots.   Indeed, there is a conjecture that a  construction due to Berge \cite{Berge} which produces knots in $S^3$ with lens space surgeries is complete (in the sense that any knot admitting a lens space surgery comes from this construction).  The purpose of this paper is discuss a strategy  by which the knot Floer homology theory of \ons \ \cite{Knots} and Rasmussen \cite{Ras1} could prove this conjecture, and to make partial progress towards implementing this strategy.  We will also try to provide evidence supporting the validity of our strategy.

\subsection{Background on the Berge Conjecture}

Before stating our results, we take some time to review the conjecture.  We begin by recalling Berge's construction.

\vspace{4mm}
\noindent{\bf Construction B:} {\em Let $(H_1,H_2,\Sigma)$ be the standard genus two Heegaard splitting of  $S^3$. Here $H_1,H_2$ are genus two handlebodies, joined along their common boundary $\partial H_1=\partial H_2=\Sigma$, a genus two surface.  Let $K\hookrightarrow \Sigma$ be a knot embedded in the Heegaard surface in such a way that $(\iota_k)_*:\pi_1(K) \hookrightarrow \pi_1(H_k)\cong F_2$ represents a generator of the fundamental group of each handlebody, where $$\iota_k: \Sigma \hookrightarrow H_k, k=1,2$$
\noindent are the inclusion maps, and $F_2$ is the free group on two generators. }

\bigskip
Following Berge, we call the knots in the above construction {\em double primitive}.  Performing Dehn surgery on $K$ with framing given by $\Sigma$ can be thought of as attaching a pair of three-dimensional two-handles, $A_1,A_2$, to $H_1$ and $H_2$, respectively, along $K$.  The double primitive condition ensures that the resulting manifolds $V_i=H_i \cup A_i$ have fundamental group $\Z$. The loop theorem then implies each $V_i$ is a solid torus, and hence the manifold obtained by the surgery is a lens space.  We have the following conjecture, which is frequently referred to as the Berge conjecture.
\bigskip

\noindent {\bf Berge Conjecture $1$:}{\em\ \  If $(S^3,K)$ is a knot on which Dehn surgery yields a lens space, then $(S^3,K)$ is double primitive.}

\bigskip
Before proceeding, we make a few observations regarding the surgery slopes in the above conjecture.  Note first that while the conjecture makes no reference to the slope of the surgery, the slope given by Construction B is clearly integral; it is specified by the framing given by $\Sigma$.  However, it follows from the Cyclic Surgery Theorem \cite{Culler} that unless $(S^3,K)$ is a torus knot, any Dehn surgery on $K$ yielding a lens space must be integral.   Since surgeries on torus knots yielding lens spaces are well-understood \cite{Moser} we will henceforth focus attention on integral surgeries unless otherwise specified.

Evocative as the Berge Conjecture may be, for the purpose of explicitly enumerating knots with lens space surgeries it is useful to view the problem from the perspective of the lens space.  To do this, we first observe that a knot $(S^3,K)$ on which Dehn surgery yields the lens space $L(p,q)$, naturally induces a knot \kind.  This knot is the core of the solid torus glued to $S^3-K$ in the surgery. Note that \kind \ is not null-homologous, as it generates the first homology $H_1(L(p,q);\Z)\cong \Z/p\Z$.  Now it is easy to see that \kind \ admits a surgery yielding the three-sphere: simply remove $K'$ and undo the original surgery.  Conversely, if surgery on \kind \ yields $S^3$, there is an induced knot $(S^3,K)$ on which surgery yields $L(p,q)$. Thus the two perspectives are equivalent.

When studying knots in lens spaces admitting $S^3$ surgeries, a natural class of knots arises:
\begin{defn} $(${\bf One-bridge}$)$ A knot $(L(p,q),K')$ is called {\em one-bridge with respect to the standard genus one Heegaard splitting of $L(p,q)$} if  $K'$ is isotopic to a knot which intersects each solid torus of the Heegaard splitting in a single unknotted arc. 
\end{defn} We will hereafter drop the Heegaard splitting from the terminology and simply say that \kind \ is one-bridge. Such knots become relevant in light of Lemma $1$ of \cite{Berge}, which shows that if $(S^3,K)$ is double primitive, then the induced knot \kind \ is one-bridge.    {\em A priori} working with the class of one-bridge knots does not simplify matters much.  Indeed, there are clearly infinitely many one-bridge knots in $L(p,q)$; in particular, it contains torus knots - those knots which can be isotoped to lie in the Heegaard torus - as a proper subset (see \cite{Choi} for a classification scheme). However, amongst the one-bridge knots in $L(p,q)$ is a particularly simple finite subfamily, which we call {\em simple} (or {\em grid-number one}) knots.  To describe these knots, let $(V_\alpha,V_\beta,T^2)$ be the standard genus one Heegaard splitting of $L(p,q)$, and let $D_\alpha$ and $D_\beta$ be the meridian discs of the two solid tori, $V_\alpha,V_\beta$.  Assume that $\partial D_\alpha$,  $\partial D_\beta$ have minimal intersection number i.e. $\partial D_\alpha \cap \partial D_\beta = \{p \ \mathrm{pts}\}$,  see Figure \ref{fig:grid1}.  We then have 
\begin{defn}$(${\bf Simple knot}$)$\label{defn:grid1} A one-bridge knot $(L(p,q),K')$ is {\em simple} if either it bounds a disk, or is the union of two properly embedded arcs, $t_\alpha,t_\beta$, in $D_\alpha$ and $D_\beta$, respectively. See Figure \ref{fig:grid1}. \end{defn}
	Note that there are $p$ simple knots in $L(p,q)$ - there is a unique simple knot in each homology class.  For the reader familiar with \os \ Floer homology, simple knots are precisely those knots which can be realized  by placing two basepoints, $z$ and $w$, on a minimally intersecting Heegaard diagram for $L(p,q)$.  For $S^3$ there is a unique simple knot - the unknot - and it is the connection between knots in lens spaces and grid diagrams that motivates the alternate terminology grid-number one; simple knots are those knots in lens spaces possessing a grid-diagram of grid-number one (see \cite{MOS} and \cite{BGH}). Simple knots and one-bridge knots are important for studying lens space surgeries due to the following theorem of Berge.

\begin{figure}
\begin{center}
\psfrag{a}{$\partial D_\alpha$}
\psfrag{b}{$\partial D_\beta$}
\psfrag{ta}{$t_\alpha$}
\psfrag{tb}{$t_\beta$}
\psfrag{z}{$z$}
\psfrag{w}{$w$}

\caption{\label{fig:grid1}
Depiction of a simple knot $K'$ in $L(7,3)$. Shown is the standard genus one Heegaard diagram of $L(7,3)$ with minimal intersection number $\partial D_\alpha \cap \partial D_\beta$. On it we have depicted a simple knot, $K'$, composed of two arcs $t_\alpha$ and $t_\beta$.  Each arc can be isotoped along $D_\alpha$ (resp. $D_\beta$) to a proper subinterval of $\partial D_\alpha$ (resp.$\partial D_\beta$).  Alternatively, $K'$ could be specified by the two basepoints, $z$ and $w$ (see Definition \ref{def:hd} for this correspondence).  \vspace{3mm}  }
\includegraphics{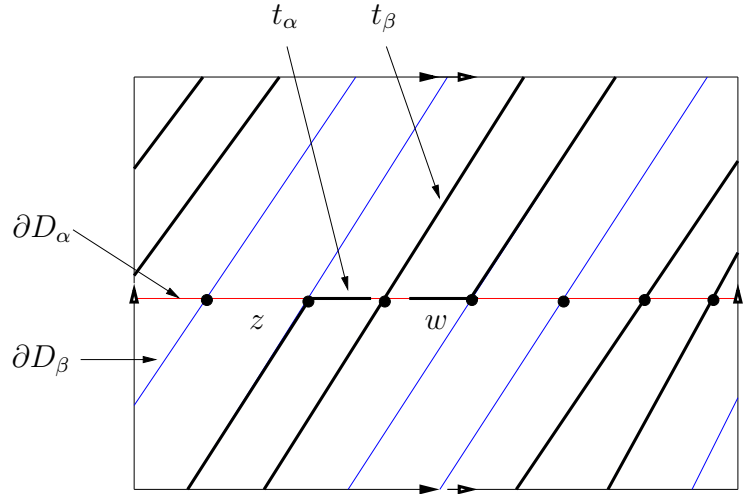}
\end{center}
\end{figure}

\begin{theorem}$($Theorem $2$ of \cite{Berge}$)$ \label{thm:berge} Suppose \kind \ is a one-bridge knot which admits a three-sphere surgery.   Then \kind \ is simple.

\end{theorem}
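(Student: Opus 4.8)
The plan is to extract from the surgery hypothesis that $\HFKa(L(p,q),K')$ is as small as it can possibly be, and then to exploit the combinatorics of one-bridge knots. First I would pass to the dual knot: if $K'\subset L(p,q)$ is one-bridge and admits a three-sphere surgery, then $K'$ is the core of the surgery solid torus for a knot $K\subset S^3$ on which some surgery produces $L(p,q)$. By the Cyclic Surgery Theorem \cite{Culler} this surgery is integral unless $K$ is a torus knot, in which case Moser's classification \cite{Moser} applies directly; either way, the first step of the program --- the main result of this paper --- gives that $\HFKa(L(p,q),K')$ has total rank exactly $p$, the same rank as the simple knot in its homology class. Next, note that ``one-bridge'' says precisely that $K'$ is a $(1,1)$-knot, hence is presented by a doubly-pointed genus-one Heegaard diagram $(T^2,\alpha,\beta,w,z)$. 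Thus the theorem reduces to the purely combinatorial statement: a $(1,1)$-knot $(L(p,q),K')$ with $\mathrm{rk}\,\HFKa=p$ is simple. (A Berge-style topological argument, via the genus-two Heegaard splitting of the knot exterior and Waldhausen's theorem, is also possible, but the route above is the one the methods of this paper naturally supply.)

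\textbf{Reducing the diagram.} To prove the combinatorial statement I would work directly on the genus-one diagram. Because $L(p,q)$ is an L-space, $\HFa(L(p,q))$ has rank $p$ and $\HFKa(L(p,q),K')$ is the $E_1$-page of the knot-filtration spectral sequence abutting to it, so $\mathrm{rk}\,\HFKa\ge p$ with equality exactly when that spectral sequence collapses at $E_1$. Concretely, put $n=|\alpha\cap\beta|\ge|[\alpha]\cdot[\beta]|=p$; the differential $\del$ on $\CFa(L(p,q))$ counts positive index-one domains disjoint from $w$ and has $\mathrm{rk}\,\del=(n-p)/2$, while the differential $\del_K$ on $\CFKa(L(p,q),K')$ counts those disjoint from both $w$ and $z$. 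The hypothesis $\mathrm{rk}\,\HFKa=p$ forces $\mathrm{rk}\,\del_K=\mathrm{rk}\,\del$, so whenever $n>p$ there are $x,y\in\alpha\cap\beta$ joined by a positive index-one domain, disjoint from $w$ and $z$, with nonzero count; on the torus this domain is an embedded bigon. Isotoping $\beta$ across this bigon is a legal move on the doubly-pointed diagram, it decreases $n$ by $2$, and it leaves $\HFKa(L(p,q),K')$ unchanged. Iterating, one arrives at a diagram with $n=p$, i.e.\ $\alpha$ and $\beta$ in minimal position; but up to isotopy there is a unique doubly-pointed genus-one diagram in minimal position in a given homology class, and it represents the simple knot (the degenerate case, with $w$ and $z$ in a common region, being the unknot, which bounds a disk). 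Hence $K'$ is simple.

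\textbf{Main obstacle.} The crux is the cancellation step. One must show that in a genus-one doubly-pointed diagram with $\mathrm{rk}\,\del_K=\mathrm{rk}\,\del>0$ there is genuinely an innermost $\{w,z\}$-disjoint bigon: that a positive index-one domain with nonzero count missing both basepoints is embedded rather than wrapping around the torus, and that an honest isotopy of the \emph{diagram} --- not merely a filtered chain-homotopy --- removes it. A priori the surplus intersection points of $\alpha$ with $\beta$ might be positioned so that no such bigon or isotopy is available, and ruling this out needs the structure special to $(1,1)$-diagrams: a winding-region normal form for doubly-pointed torus diagrams together with the classification of their low-index domains (the same machinery used to compute $\HFKa$ of $(1,1)$-knots). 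I would carry this out by first isotoping the diagram into a standard position adapted to $\{w,z\}$ and then inducting on $n$. By comparison, the surgery input and the $E_1$-to-$E_\infty$ rank inequality are formal.
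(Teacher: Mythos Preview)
Your combinatorial reduction --- that a one-bridge knot with $\mathrm{rk}\,\HFKa=p$ can be brought to a minimal doubly-pointed genus-one diagram by cancelling bigons that miss both basepoints --- is essentially the paper's own argument (the first half of Proposition~\ref{prop:oneone}). The ``main obstacle'' you flag is not really one: on a genus-one diagram the positive index-one domains with no obtuse corners are embedded bigons, and the paper simply isotopes across them without further analysis.

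The genuine gap is upstream. You assert that ``the main result of this paper gives that $\HFKa(L(p,q),K')$ has total rank exactly $p$,'' but Theorem~\ref{thm:main} does not say this. It says the rank is $p$ when $p\ge 2g(K)$, and $p+2$ when $p=2g(K)-1$, where $g(K)$ is the Seifert genus of the dual knot in $S^3$. You have silently discarded the second case. The paper treats it separately: when the rank is $p+2$, a refinement of the bigon argument using the one-basepoint differentials $\partial_z$ and $\partial_w$ forces $K'$ to be one of two specific non-simple knots $T_R,T_L$ (Figure~\ref{fig:small}). So the paper's Floer-theoretic output is not Berge's theorem as stated but the dichotomy of Theorem~\ref{thm:isotope}; to recover Berge's statement in full the paper then invokes Berge's original result to exclude the $p=2g-1$ branch (see the Corollary immediately after Theorem~\ref{thm:isotope}, and the parenthetical caveat in the introduction). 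Your proposal, by collapsing both cases of Theorem~\ref{thm:main} into one, omits exactly the part where the argument is delicate.
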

We present a Floer homology proof of the above theorem in Section \ref{sec:oneone} (though in case the knot induced in the three-sphere by the surgery satisfies $p=2g(K)-1$, with $g(K)$ the Seifert genus, our theorem takes a slightly different form.)  It is straightforward to see that, upon performing the surgery on a simple knot $(L(p,q),K')$, the induced knot $(S^3,K)$ is double primitive.  We are thus led to the useful (equivalent) reformulation of Conjecture $1$, which is stated as a question in \cite{Berge}.

\bigskip

\noindent{\bf Berge Conjecture $2$:} {\em\ \ Suppose \kind \ is a knot which admits a three-sphere surgery.  Then \kind \ is  one-bridge.}

\bigskip
Coupled with Theorem \ref{thm:berge}, an affirmative answer to Conjecture $2$ would allow one to explicitly enumerate all knots in $S^3$ on which surgery could yield a fixed lens space.  To see this, assume surgery on $(S^3,K)$ yields $L(p,q)$.  The above discussion shows that the induced knot \kind\ has an $S^3$ surgery and hence, by Conjecture $2$ and Theorem \ref{thm:berge}, is simple.  Now for each simple knot, $K_i$, there is at most one integral slope surgery producing an integer homology sphere, $M(K_i)$.  Furthermore, the naturally presented Heegaard splitting of $M(K_i)$ is genus two.  One then uses the well-known algorithm to determine if a genus two Heegaard splitting is the three-sphere to determine if $M(K_i)\cong S^3$.  Each $K_i$ for which $M(K_i)\cong S^3$ has an induced (double primitive) knot in $S^3$ on which surgery yields $L(p,q)$.  In this way, a proof of Conjecture $2$ allows a finite enumeration of knots for which surgery yields $L(p,q)$.

\subsection{Statement of results - The role of Floer homology}
Using knot Floer homology, we can hope to prove Conjecture $2$. To do so recall that to any knot $(Y,K)$ in a rational homology sphere (i.e. $H_*(Y;\Q)\cong H_*(S^3;\Q)$), \ons \ associate a collection of bigraded groups (see \cite{RationalSurgeries}),
$$\HFKa(Y,K):=\underset{*\in\Q , \ \xi \in \RelSpinCYK}\bigoplus\HFKa_*(Y,K,\xi).$$
These groups are graded by the Maslov index, which we denote by $*$, and by relative $\SpinC$ structures, $\xi$ \ on $Y-\nu(K)$, the set of which we denote by $\RelSpinCYK$.   The reader unfamiliar with relative $\SpinC$ structures can think of this as a grading by elements of $H_1(Y\!-\!K;\Z)$, since there is an affine isomorphism $\RelSpinCYK\cong H^2(Y,K;\Z)\cong H_1(Y\!-\!K;\Z)$.   This grading should also be viewed as the analog of the Alexander grading on the knot Floer homology of knots in $S^3$.  That is, relative $\SpinC$ structures play the role of the powers of $T$ in the Alexander polynomial of a knot $(S^3,K)$.  

Now the knot Floer homology groups of $(Y,K)$ arise as the associated graded groups of filtrations of the \os \ chain complexes $\CFa(Y,\spinc)$  (here $\spinc$ is a $\SpinC$ structure on $Y$).   Thus there is a spectral sequence which begins with  $\HFKa(Y,K)$ and converges to $\HFa(Y)$, where $\HFa(Y)$ is the direct sum:
$$\HFa(Y):=\underset{\spinc\in \SpinC(Y)}\bigoplus \HFa(Y,\spinc).$$
\noindent It follows immediately that we have the inequality of ranks:
$$\mathrm{rk}\ \HFa(Y) \le \mathrm{rk}\ \HFKa(Y,K)$$ 
We say that a knot has {\em simple} Floer homology if equality holds.  In the case of lens spaces, rk$\ \HFa(L(p,q),\spinc)=1$ for every $\spinc\in \SpinC(L(p,q))$.  Thus $(L(p,q),K')$ has simple Floer homology if  rk$\ \HFKa(L(p,q),K')=p$.

Our first step towards Conjecture $2$ is the following restriction on the knot Floer homology of the knot in $L(p,q)$ induced by the surgery.
\begin{theorem}\label{thm:main}
	Let $(S^3,K)$ be a knot of Seifert genus $g$ and suppose that there exists an integer $p>0$ such that  $p$ surgery on $K$ yields the lens space, $L(p,q)$. Let \kind \ be the knot induced by the surgery. Then
	\begin{enumerate}
		\item	$p\ge 2g-1$,
		\item  If $p\ge 2g$, then \kind \ has simple Floer homology, 
		\item If $p=2g-1$, then  $\mathrm{rk}\ \HFKa(L(p,q),K')=\mathrm{rk}\ \HFa(L(p,q))+2$.
	\end{enumerate}
\end{theorem}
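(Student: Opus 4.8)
The plan is to compute $\HFKa(\kindnm)$ directly from the knot Floer chain complex $\CFKinf(K)$ of the original knot $K$, which the lens space surgery hypothesis forces to be extremely rigid. Since $p>0$ and $L(p,q)$ is an L-space, $K$ is an L-space knot, so by a theorem of \ons: $\Rk\,\HFKa(S^3,K,i)\le 1$ for every $i$, the filtered complex $\CFKinf(K)$ is filtered chain homotopy equivalent to a staircase complex determined by $\Delta_K$, and $\tau(K)=g$. In particular $\HFKa(S^3,K,\pm g)\cong\Z$, the subquotient complexes $\widehat A_s:=C\{\max(i,j-s)=0\}$ satisfy $H_*(\widehat A_s)\cong\Z$ for all $s\in\Z$, and the staircase shape forces the vertical and horizontal projections $\widehat v_s,\widehat h_s$ entering the integer surgery formula to act as zero on homology whenever $|s|\le g-1$.

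Assertion (1) I would obtain from the mapping cone formula for $\HFa(\Surgnm)$: the surviving $\widehat A$-complexes in the truncated mapping cone are exactly those with $|s|\le g-1$, so if $p\le 2g-2$ then some $\SpinC$ structure $\spinc_0\in\Z/p\Z$ receives two of them; since all the connecting maps out of those two vanish on homology, $\Rk\,\HFa(\Surgnm,\spinc_0)\ge 2$, contradicting that $L(p,q)$ is an L-space. Hence $p\ge 2g-1$. (This bound is in any case a known consequence of the hypothesis.)

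Assertions (2) and (3) require the knot Floer homology of the dual knot $K'$, which I would compute via the large-surgery / mapping cone formula --- equivalently, from the doubly-pointed Heegaard diagram obtained by winding a single $\beta$-curve past the two basepoints of $K'$ exactly $p$ times. Here $L(p,q)-K'=S^3-\nu(K)$, so the set $\RelSpinCLpq$ of relative $\SpinC$ structures is a $\Z$-torsor mapping $p$-to-one onto $\SpinC(L(p,q))$, and for $p\ge 2g-1$ this formula identifies each group $\HFKa(L(p,q),K',\relspinct)$ with the homology of an explicit associated-graded summand of $\CFKinf(K)$. Feeding in the staircase model, one determines all of these groups and the $\relspinct$ for which they are nonzero. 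For $p\ge 2g$ the total comes out to be $\Rk\,\HFKa(\kindnm)=p$, with one generator lying over each $\SpinC$ structure of $L(p,q)$; since $\HFa(L(p,q))$ has rank $p$, this says exactly that $K'$ has simple Floer homology. For $p=2g-1$ --- the minimal case, where the mapping cone only just truncates --- the calculation produces two further generators, tied to the extreme Alexander gradings $\pm g$ of $K$ (where $\HFKa(S^3,K,\pm g)\cong\Z$), and one gets $\Rk\,\HFKa(\kindnm)=p+2=\Rk\,\HFa(L(p,q))+2$. As a consistency check the resulting counts match the coefficient count of the Turaev torsion of $S^3-\nu(K)$, bearing in mind that the staircase keeps each $\HFKa(L(p,q),K',\relspinct)$ in a single Maslov grading.

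The hard part will be pinning down the large-surgery description of the dual knot's knot Floer homology with its correct relative $\SpinC$ grading --- in particular, identifying precisely how the two-basepoint filtration of $K'$ sits on the surgery complex and how this identification depends on $p$ --- and then handling the minimal case $p=2g-1$, where the two extra generators appear. Once $\CFKinf(K)$ is known to be a staircase, the remaining rank bookkeeping is routine.
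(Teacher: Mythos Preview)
Your overall strategy is sound, but it is \emph{not} the route taken in the paper. The paper proceeds quite indirectly, through Whitehead doubles: the author's earlier work identifies $\HFKa(D_+(K,p),1)$ with $\bigoplus_{\relspinc}\HFKa(S^3_p(K),K',\relspinc)$, and a separate formula expresses $\HFKa(D_+(K,p),1)$ in terms of the groups $H_*(\Filt(K,i))$ and $\tau(K)$. The L-space hypothesis then enters via the \ons\ structure theorem to show $\tau(K)=g$ and, through a short exact sequence argument on $C\{\max(i,j-m)=0\}$, that $\Rk\,H(\Filt(K,m))+\Rk\,H(\Filt(K,-m-1))=1$ for all $m$; summing these contributions gives $p$ or $p+2$. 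Part (1) is simply quoted from Kronheimer--Mrowka--\ons.

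Your proposal instead computes $\HFKa(\kindnm)$ directly from the staircase model of $\CFKinf(K)$ via the surgery/mapping-cone description of the dual knot. This is essentially the independent approach of Rasmussen that the paper cites in its remark. It is arguably more conceptual and avoids the detour through satellite knots, but---as you correctly flag---it requires carefully establishing how the doubly-pointed filtration of $K'$ sits on the surgery complex (a nontrivial identification at the time). The paper's Whitehead-double approach trades that difficulty for reliance on the author's existing machinery, reducing the new work to the rank computation in Proposition~2.6. Your argument for part (1) via the vanishing of $\widehat v_s,\widehat h_s$ on homology for $|s|\le g-1$ is a clean alternative to citing \cite{KMOS}; the paper does not pursue this.
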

\begin{remark}  By reflecting $K$ if necessary, the assumption that $p$ be positive is non-restrictive. This theorem was recently proved by Rasmussen \cite{Ras3} using a different strategy. \end{remark}
\noindent The fact that $p\ge 2g-1$ is a result first proved by Kronheimer, Mrowka, \ons \ in the context of monopole Floer homology \cite{KMOS}.   Note that when $p=2g-1$,  the induced knot nearly has simple Floer homology - the total rank of the knot Floer homology is only two greater than the Floer homology of the ambient lens space.  We also note that the above theorem is not specific to lens spaces; it also holds if we replace the lens space with an arbitrary L-space. (Recall that an L-space is a rational homology sphere, $Y$, with  rk$\HFa(Y,\spinc)=1$ for each $\spinc\in \SpinC(Y)$.) The Berge conjectures would then follow from

\begin{conj}\label{conj:simp1} (Conjecture $1.5$ of \cite{BGH}) A knot \kind \   with simple Floer homology is simple (in the sense of Definition \ref{defn:grid1}.   
\end{conj}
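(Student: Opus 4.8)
The plan is to use the \emph{fibering detection} theorems of knot Floer homology, together with Juh\'asz's sutured Floer homology, to pass from the algebraic condition ``simple Floer homology'' to the geometric condition of Definition~\ref{defn:grid1}. Throughout write $(L(p,q),K')$ for the knot and recall that $K'$ generates $H_1(L(p,q))\cong\Z/p\Z$, so $K'$ is rationally null-homologous and the complement satisfies $H_1(L(p,q)-\nu(K'))\cong\Z$; thus $\RelSpinCLpq$ is an affine space over $\Z$ whose forgetful map to $\SpinC(L(p,q))$ is reduction modulo $p$. I would first reformulate the hypothesis. Since the spectral sequence from $\HFKa(L(p,q),K')$ converges to $\HFa(L(p,q))$ and $\mathrm{rk}\,\HFa(L(p,q),\spinc)=1$ for each $\spinc$, the equality $\mathrm{rk}\,\HFKa(L(p,q),K')=p$ forces this spectral sequence to degenerate and forces each group $\HFKa(L(p,q),K',\xi)$ to be $0$ or a single copy of $\F$, with exactly one nonzero group lying over each of the $p$ $\SpinC$ structures on $L(p,q)$. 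In particular the support is a complete residue system modulo $p$ in the affine line $\RelSpinCLpq$, and the extremal supported relative $\SpinC$ structure carries a rank-one group.

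The second step is to deduce that $K'$ is fibered. Rank one in the extremal relative $\SpinC$ structure is precisely the monic condition in the fibering detection theorem of Ni (in the form valid for rationally null-homologous knots in a rational homology sphere, with the ambient tautness supplied by irreducibility of the lens space complement), so $L(p,q)-\nu(K')$ fibers over $S^1$ with fiber $F$. Moreover $F$ is Thurston-norm minimizing and $-\chi(F)$ is read off from the width of the support, so the Seifert-like genus of $K'$ is already determined by its Floer homology.

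The third, and hardest, step is to promote ``fibered'' to ``grid-number one.'' Here I would pass to the sutured category: by Juh\'asz's identification $\HFKa(L(p,q),K')$ is the sutured Floer homology of the complement with two meridional sutures, and simple Floer homology says this taut sutured manifold attains the minimal possible rank. I would decompose it along the fiber $F$ and iterate Juh\'asz's sutured decomposition theorem, using the rank-one-in-every-grading condition to force each decomposition piece to carry rank-one sutured Floer homology and hence to be a product sutured manifold. Backing out the decomposition should show the monodromy is isotopic to one as simple as possible, so that $F$ is planar and $K'$ admits a genus-one doubly-pointed Heegaard diagram; exhibiting such a diagram displays $K'$ as a grid-number-one, hence simple, knot, necessarily the unique simple knot in its homology class.

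I expect the main obstacle to be exactly this last step. Fibering detection is cheap, as it uses only the top grading, but converting the full rank-one-in-all-gradings condition into control of the monodromy requires tracking the $\SpinC$-refined sutured decompositions of a complement that is \emph{not} null-homologous, where both the relative grading bookkeeping and the tautness hypotheses of the decomposition theorem are delicate; it is precisely this gap---between being fibered with constrained Floer homology and being geometrically simple---that keeps the statement conjectural. A realistic fallback, following the use of Baker's results alluded to in the introduction, is to carry out the sutured analysis only for restricted families (for instance fixed small $-\chi(F)$, or lens spaces of a prescribed form), which already suffices to produce the infinite families of knots characterized by their knot Floer homology.
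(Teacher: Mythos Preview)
The statement you are trying to prove is presented in the paper as an \emph{open conjecture}; the paper does not prove it and does not claim to.  What the paper establishes is only the special case where $K'$ is already assumed to be one-bridge (Proposition~\ref{prop:oneone}), and consequently the case where $K'$ satisfies the genus bound $g(K')\le (p+1)/4$ (Theorem~\ref{thm:baker}), since Baker's theorem shows that such knots are one-bridge.

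The paper's argument for the one-bridge case is entirely different from your outline and far more elementary.  It is a direct combinatorial observation on genus-one doubly-pointed Heegaard diagrams: if a minimal such diagram for $K'$ has more than $p$ intersection points while $\mathrm{rk}\,\HFKa(L(p,q),K')=p$, then some differential must be nonzero, which produces a bigon missing both basepoints, and that bigon supports an isotopy reducing the intersection number --- contradicting minimality.  There is no sutured Floer homology, no fibering detection, and no monodromy analysis.

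Your Steps~1 and~2 are reasonable (and indeed known), and you correctly flag Step~3 as the real gap.  Two further points, though.  First, you silently assume that $[K']$ generates $H_1(L(p,q))$; the conjecture as stated carries no such hypothesis, although it does hold in the Berge-conjecture application.  Second, there is a conflation at the end of Step~3: admitting a genus-one doubly-pointed Heegaard diagram means $K'$ is \emph{one-bridge}, not that it is grid-number one; being \emph{simple} requires such a diagram with exactly $p$ intersection points.  So even if your sutured decomposition argument succeeded, what it would actually produce is the one-bridge condition, after which the paper's combinatorial Proposition~\ref{prop:oneone} finishes the job.  In other words, the content of your Step~3 is exactly to show that simple Floer homology forces one-bridge --- and that is precisely the open part of the conjecture.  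Your fallback to restricted families is sensible in spirit, but note that the paper's restricted results do not go through sutured decompositions at all: they reach the one-bridge hypothesis via Baker's purely topological theorem and then apply the elementary bigon argument.
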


\begin{conj}\label{conj:simp2} (Conjecture $1.6$ of \cite{BGH})  There are exactly two knots $T_R,T_L \subset L(p,q)$  which satisfy $$ \mathrm{rk}\ \HFKa(L(p,q),T)=\mathrm{rk}\ \HFa(L(p,q))+2.$$\end{conj}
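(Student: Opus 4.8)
The plan is to run the lens-space analogue of the $S^3$ statement that the only knots with $\mathrm{rk}\,\HFKa = 3$ are the two trefoils, treating $T_R,T_L$ as the lens-space stand-ins for the right- and left-handed trefoils (which realize the second-smallest possible total rank). First I would record the algebraic meaning of the rank condition. Since the spectral sequence from $\HFKa(L(p,q),T)$ converges to $\HFa(L(p,q))$, which has rank one in each of the $p$ elements of $\SpinC(L(p,q))$, the rank over any individual $\spinc\in\SpinC(L(p,q))$ is odd, so the excess above one is even in each $\spinc$, and the total excess above $p$ equals two. A sum of non-negative even integers equal to two forces a unique $\spinc_0$ carrying exactly three generators, lying in three distinct relative $\SpinC$ gradings (a top, a middle, and a bottom, with a single length-one differential of the spectral sequence cancelling two of them), while every other $\spinc$ carries a single generator, exactly as for a simple knot in the sense of Definition \ref{defn:grid1}.

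Next I would translate this into geometry. Away from $\spinc_0$ the knot is Floer-simple, and in $\spinc_0$ the three-generator configuration is precisely the trefoil pattern: nonzero homology in the two extremal relative $\SpinC$ gradings, symmetric about the middle. Invoking genus and fiberedness detection in the form available for knots in rational homology spheres (the work of Ni, building on Ghiggini), the extremal support pins down the Thurston norm and hence the genus of $T$, and forces the complement of $T$ to fiber, with fiber a once-punctured torus carrying the minimal monodromy compatible with the rank-three pattern. The aim of this step is to show that $T$ is one-bridge — the analogue for rank $p+2$ of the hard implication in Conjecture \ref{conj:simp1} — and, moreover, that the single surviving cancelling pair corresponds to exactly one ``clasp'' beyond the grid-number-one picture, so that $T$ admits a grid presentation of the next-simplest complexity.

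With $T$ confined to this restricted family, I would classify and count. Here Theorem \ref{thm:berge}, together with Baker's enumeration of the relevant one-bridge and doubly-primitive families \cite{Baker1,Baker2}, should reduce the possibilities in each lens space $L(p,q)$ to a single knot and the image of that knot under the orientation-reversing symmetry of $L(p,q)$ (equivalently, under interchanging the roles of the two solid tori of the Heegaard splitting). These two knots are $T_R$ and $T_L$; they are distinct in general precisely because of their opposite behaviour under this symmetry, which yields exactly two and no more.

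The hard part — and the reason this remains a conjecture rather than a theorem — is the passage from the Floer condition to the geometric classification, the very step that already obstructs Conjecture \ref{conj:simp1}. The Floer package delivers the genus and fiberedness of $T$, but detecting one-bridgeness, and thereby controlling grid complexity, from rank data alone is exactly what is missing in general. In $S^3$ the analogous count succeeds only because the genus-one fibered knots are completely classified — they are the two trefoils and the figure-eight, and the figure-eight already has $\HFKa$-rank five — and no comparably clean classification of the corresponding fibered knots in a general lens space is available. This is why the conjecture can be established unconditionally only for those families where Baker's results supply the missing classification.
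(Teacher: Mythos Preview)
The statement you are attempting is labeled as a conjecture in the paper, and the paper offers no proof of it. You recognize this yourself in your final paragraph, so your proposal is really a heuristic outline of what a proof might entail, together with an honest admission that the key step --- passing from the rank condition to one-bridgeness --- is exactly what is missing. That assessment is accurate.

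Where the paper does make progress, its route differs from yours. The paper establishes the conjecture only under additional hypotheses: for one-bridge knots via Proposition~\ref{prop:oneone}, and for knots satisfying the genus bound $g(K')\le (p+1)/4$ via Theorem~\ref{thm:baker} (which invokes Baker's theorem to reduce to the one-bridge case). The argument in Proposition~\ref{prop:oneone} is purely Heegaard-diagram combinatorics: one takes a genus-one doubly-pointed diagram with the fewest intersection points, uses the refined differentials $\partial_z$ and $\partial_w$ to locate two bigons each containing a single basepoint, and then argues by intersection-count that these bigons must share a corner, forcing the diagram into the shape of Figure~\ref{fig:small}. There is no appeal to fiberedness or Thurston norm detection in that argument.

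A few of your intermediate claims also do not transfer cleanly from $S^3$ to a general lens space. The assertion that the fiber is a once-punctured torus is specific to the trefoil in $S^3$; for the knots $T_R,T_L$ in $L(p,q)$ (which are the analogues pictured in Figure~\ref{fig:small}) the rational Seifert genus, the boundary behavior of the spanning surface, and the monodromy depend on $p$ and $q$, and the paper does not claim any such uniform description. Likewise, your description of the three generators in $\spinc_0$ as sitting in ``top, middle, bottom'' Alexander gradings with a length-one cancelling differential is the $S^3$ picture; in a lens space the relative $\SpinC$ gradings and the length of the cancelling differential need not match that pattern. These points do not affect your overall diagnosis of the obstruction, but they would need to be reworked if one were to push the outline further.
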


In Section \ref{sec:oneone}, two knots  satisfying $rk(\widehat{HFK}(L(p,q),K)) = p+2$ are specified for each lens space and we show that surgery on them cannot produce $S^3$. Thus a proof of the above conjectures, together with Theorem~\ref{thm:main}, would indeed prove the Berge conjecture. 

 Note that the hypothesis for these conjectures only involves the total rank of the knot Floer homology groups of \kind.  The groups have a rich structure inherited from their bigrading.  It is possible that it would be necessary to exploit this structure.  Thus we are also led to:

\begin{conj}\label{conj:characterize} Let $(L(p,q),G)$ be any simple knot. Suppose that for some knot \kind, we have $$\HFKa_*(L(p,q),K',\relspinc_i)\cong \HFKa_*(L(p,q),G,\relspinc),$$
\noindent for all $*$ and $\relspinc$.  Then $(L(p,q),K')$ is isotopic to $(L(p,q),G)$.  That is, simple knots are characterized by their Floer homology.
\end{conj}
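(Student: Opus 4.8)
The plan is to prove Conjecture \ref{conj:characterize} in two stages: first use the total rank to conclude that $(L(p,q),K')$ is itself a simple knot, and then show that the bigraded knot Floer homology of a simple knot determines its class in $H_1(L(p,q);\Z)\cong\Z/p\Z$, whence (by the classification of simple knots) the isotopy type. First I would observe that the hypothesis forces $\Rk\,\HFKa(L(p,q),K')=\Rk\,\HFKa(L(p,q),G)=p=\Rk\,\HFa(L(p,q))$, so that $K'$ has simple Floer homology. Invoking Conjecture \ref{conj:simp1}, $K'$ is then a simple knot. Since there is exactly one simple knot in each of the $p$ homology classes, the conjecture reduces to the purely combinatorial assertion that two simple knots with isomorphic bigraded Floer homology lie in the same class of $\Z/p\Z$.

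The core of the argument is to recover the homology class $[K']$ from the bigraded groups, and here a subtlety must be confronted. Because a simple knot has exactly one generator in each $\spinc\in\SpinC(L(p,q))$ surviving the spectral sequence to $\HFa(L(p,q),\spinc)\cong\F$, the Maslov grading of that generator is forced to equal the correction term ($d$-invariant) of $(L(p,q),\spinc)$, which is intrinsic to the lens space and hence identical for every simple knot. Thus the Maslov grading carries no information about $[K']$, and the entire class-dependence is concentrated in the relative $\SpinC$ grading, i.e.\ in the distribution of the $p$ generators among the relative $\SpinC$ structures lying over each $\spinc$. I would compute this distribution explicitly from the grid-number-one Heegaard diagram of the simple knot in class $h$; the resulting multiset of Alexander gradings is a Turaev-torsion-type refinement of the data $(p,q,h)$. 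The key lemma is that the map $h\mapsto(\text{this graded data})$ is injective on $\Z/p\Z$, once one fixes the affine identification of relative $\SpinC$ structures coming from the shared projection to $\SpinC(L(p,q))$ together with the standard symmetrization of the Alexander filtration.

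The main obstacle is twofold. The deepest input is Conjecture \ref{conj:simp1} itself, which is open; without it one cannot promote ``$K'$ has simple Floer homology'' to ``$K'$ is a simple knot.'' Absent that conjecture, I would first establish the weaker, unconditional statement obtained by adding to the hypothesis that $K'$ is already known to be simple, which isolates exactly the combinatorial content of \ref{conj:characterize} and is a meaningful milestone in its own right. The second difficulty is the injectivity of $h\mapsto(\text{graded data})$ together with the comparison of relative $\SpinC$ structures across knots in distinct classes: the conjugation symmetry of knot Floer homology pairs the class $h$ with $-h$, so one must verify that the isomorphism in the statement respects orientations finely enough to separate $h$ from $-h$, and more generally that no accidental coincidences among the simple-knot gradings occur. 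Verifying this last point amounts to a careful, but in principle finite and explicit, number-theoretic analysis of the Alexander gradings attached to $(p,q,h)$.
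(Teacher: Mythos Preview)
The statement is a \emph{conjecture}; the paper does not prove it. What the paper offers is a sketch of a partial, unconditional result: Conjecture~\ref{conj:characterize} holds whenever the simple knot $G$ satisfies the genus bound $g(G)\le (p+1)/4$. Your proposal, by contrast, is a conditional reduction of the full conjecture to Conjecture~\ref{conj:simp1}, which you correctly flag as open. So as a proof it has a genuine gap at exactly the point you identify: the passage from ``$K'$ has simple Floer homology'' to ``$K'$ is simple'' is the content of an unproven conjecture, and invoking it does not yield a proof of Conjecture~\ref{conj:characterize}.

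It is worth comparing your reduction with the paper's unconditional route in the genus-restricted case, because the mechanisms differ. The paper does not assume Conjecture~\ref{conj:simp1}. Instead it argues: the bigraded isomorphism with $G$ forces the Alexander-grading breadth of $\HFKa(L(p,q),K')$ to match that of $G$; Ni's theorem then pins down $g(K')$, so $K'$ inherits the bound $g(K')\le (p+1)/4$; Baker's theorem promotes this to ``$K'$ is one-bridge''; and Proposition~\ref{prop:oneone} (the Floer-homology proof of Berge's theorem) then gives ``$K'$ is simple.'' This chain is unconditional and uses the \emph{bigrading} (via Ni) in an essential way that your first step does not---you only use the total rank. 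Your second stage, checking that distinct simple knots in $L(p,q)$ have distinct bigraded Floer homology, is exactly what the paper says remains to be verified (``one can check that there is a unique simple knot\ldots''), and the paper likewise postpones the details; your remarks about the $h\leftrightarrow -h$ conjugation symmetry are a reasonable identification of where the delicate point lies.
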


In Section \ref{sec:justify} we provide some justification for the conjectures.  In particular, by using work of Baker \cite{Baker1}, we prove Conjectures \ref{conj:simp1} and \ref{conj:simp2} for knots in $L(p,q)$ satisfying a genus constraint.  To describe this constraint, let us consider only 
those knots \kind \ whose homology class $[K']\in H_1(L(p,q);\Z)$ generates.  For such a knot it makes sense to define the genus of $K'$, denoted $g(K')$, to be the minimum genus of any properly embedded surface-with-boundary 
$$i: (F,\partial F)\hookrightarrow (L(p,q)-\nu K',\partial\nu K')$$
whose homology class is Poincar{\'e} dual to the generator of $H^1(L(p,q)-K';\Z)\cong \Z$.  We then have

\begin{theorem}\label{thm:baker}
	Let \kind \ be any knot whose homology class generates $H_1(L(p,q);\Z)$ and which satisfies $$g(K')\le \frac{p+1}{4}.$$ \noindent Then Conjectures \ref{conj:simp1} and \ref{conj:simp2} hold for \kind.  In particular, if \kind \ has simple Floer homology then \kind \ is simple.  
\end{theorem}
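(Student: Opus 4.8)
\medskip
\noindent\emph{Proof sketch.}\ The hypotheses are that $[K']$ generates $H_1(L(p,q);\Z)$ and that $g(K')\le\frac{p+1}{4}$, and what must be established is the conclusion of Conjectures~\ref{conj:simp1} and~\ref{conj:simp2} for this particular $K'$: that $\mathrm{rk}\,\HFKa(L(p,q),K')=p$ forces $K'$ to be simple in the sense of Definition~\ref{defn:grid1}, and that $\mathrm{rk}\,\HFKa(L(p,q),K')=\mathrm{rk}\,\HFa(L(p,q))+2=p+2$ forces $K'$ to be one of the two distinguished knots $T_R,T_L$. The plan is to reduce, via Baker's classification, to an explicit list of knots in $L(p,q)$, and then to identify each knot on that list through the total rank of its knot Floer homology.

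First I would quote Baker~\cite{Baker1}: a knot $(L(p,q),K')$ whose homology class generates $H_1(L(p,q);\Z)$ and whose (rational Seifert) genus satisfies $g(K')\le\frac{p+1}{4}$ belongs to an explicit finite family. What the argument needs is that this family is exhausted by the simple knots of Definition~\ref{defn:grid1} whose genus is that small, together with the two distinguished knots $T_R,T_L$ produced in Section~\ref{sec:oneone} --- and, a priori, perhaps by finitely many further knots, such as torus knots carried by the Heegaard torus. Matching Baker's hypothesis, which is phrased in terms of bridge number and the position of the knot relative to the genus-one splitting, against the rational-genus bound $g(K')\le\frac{p+1}{4}$ uses that knot Floer homology (equivalently, the rational Thurston norm) detects $g(K')$, by the genus-detection theorem for rationally null-homologous knots (\ons; Ni).

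Next I would pin down $\mathrm{rk}\,\HFKa$ for every knot on Baker's list. A simple knot admits a genus-one Heegaard diagram for $L(p,q)$ carrying exactly $p$ intersection points, so $\mathrm{rk}\,\HFKa(L(p,q),K')\le p$; since the spectral sequence from $\HFKa(L(p,q),K')$ to $\HFa(L(p,q))$ already gives $\mathrm{rk}\,\HFKa\ge\mathrm{rk}\,\HFa(L(p,q))=p$, a simple knot has $\mathrm{rk}\,\HFKa=p$ exactly. The knots $T_R$ and $T_L$ have $\mathrm{rk}\,\HFKa=p+2$ --- this is precisely the computation carried out in Section~\ref{sec:oneone}. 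For any remaining knot permitted by the classification one would compute $\HFKa$ directly, using the rational surgery formula (or the known knot Floer homology of torus knots in lens spaces), and check that its total rank is neither $p$ nor $p+2$; here one can again use that $\HFKa$ detects $g(K')$ and fiberedness as a bookkeeping device, since the genus bound confines the support of $\HFKa(L(p,q),K')$ within a band of relative $\SpinC$ structures whose size is controlled linearly by $g(K')$, while $L(p,q)$ being an L-space forces $\HFKa(L(p,q),K')$ to be nonzero over each of its $p$ $\SpinC$ structures.

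Assembling these steps completes the proof: if $\mathrm{rk}\,\HFKa(L(p,q),K')=p$ then $K'$ lies on Baker's list and has total rank $p$, hence is simple, so Conjecture~\ref{conj:simp1} holds for $K'$; and if $\mathrm{rk}\,\HFKa(L(p,q),K')=p+2$ then $K'$ lies on the list with total rank $p+2$, hence equals $T_R$ or $T_L$, so Conjecture~\ref{conj:simp2} holds for $K'$. The hard part will be the interface with Baker's theorem: both checking that his geometric hypotheses are genuinely implied by $g(K')\le\frac{p+1}{4}$ --- where the rational nature of the Seifert genus, the dual surface wrapping $p$ times around $K'$, must be tracked with care --- and controlling the knot Floer homology of every knot that his classification allows, so that the rank conditions $\mathrm{rk}\,\HFKa=p$ and $\mathrm{rk}\,\HFKa=p+2$ are seen to single out exactly the simple knots and the pair $\{T_R,T_L\}$ respectively.
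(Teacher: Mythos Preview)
Your proposal misreads what Baker's theorem actually gives you, and as a result you miss the lemma that does all the work. Theorem~1.1 of \cite{Baker1} does not produce a finite list of knots to be checked one by one; its conclusion is simply that a knot $(L(p,q),K')$ with $[K']$ a generator and $g(K')\le\frac{p+1}{4}$ is \emph{one-bridge} with respect to the genus-one splitting. There are infinitely many one-bridge knots in a given lens space, so the program of ``computing $\HFKa$ for every knot on Baker's list'' is not a finite task and cannot be carried out as you describe.

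What the paper does instead is feed Baker's output directly into Proposition~\ref{prop:oneone}: for any one-bridge knot in $L(p,q)$, simple Floer homology forces the knot to be simple, and rank $p+2$ forces it to be $T_R$ or $T_L$. That proposition is proved by an elementary argument on the doubly-pointed genus-one Heegaard diagram --- if $\mathrm{rk}\,\HFKa<\mathrm{rk}\,\CFKa$ there must be a bigon avoiding both basepoints, and one isotopes across it to reduce the number of intersection points --- so no surgery formulas, torus-knot computations, or case analyses are required. For Conjecture~\ref{conj:simp2} the paper then notes (via Ni) that $T_R,T_L$ themselves violate the genus bound, so the rank-$(p+2)$ case is in fact vacuous under the hypothesis. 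Your sketch never invokes Proposition~\ref{prop:oneone}, and the substitute you propose (direct computation over an imagined finite list) does not exist.
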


Moreover, by using Baker's result with a result of Ni \cite{YiNi}, it is also possible to prove Conjecture \ref{conj:characterize} for an infinite family of simple knots.  This result seems quite interesting in its own right, as it provides the first infinite family of knots with non-trivial Thurston norm which are characterized by their knot Floer homology (the previous known examples being the figure-eight and trefoil knots).  We briefly discuss this result in Section \ref{sec:justify} and postpone the detailed proof for later \cite{InfiniteFloer}.

\bigskip
\noindent {\bf Outline:}  The next section provides the proof of Theorem \ref{thm:main} which relies heavily on previous work of the author and \ons.  Aided by this theorem, Section \ref{sec:oneone} uses a simple Floer homology argument to prove Berge's Theorem $2$, mentioned above.  While our argument uses the machinery of Floer homology, it avoids the use of the algorithm to detect if a genus two Heegaard splitting is the three-sphere and the Cyclic Surgery Theorem \cite{Culler}. In the final section we discuss evidence for the conjectures and prove Theorem \ref{thm:baker}.  

\bigskip
\noindent {\bf Acknowledgments:} This work has benefited much from conversations with Ken Baker and Eli Grigsby, and the general strategy presented here is part of our joint ongoing work \cite{BGH}.  I also thank Cameron Gordon for generous sharing of his knowledge of Dehn surgery, and Jake Rasmussen for sharing his independent work on Floer homology and the Berge conjecture.


\section{Proof of Main Theorem}
\subsection{Outline}
This section is devoted to a proof of Theorem \ref{thm:main}.  Before beginning, we briefly sketch the idea. Denote the $p$-twisted (positive-clasped) Whitehead double of a knot $K\hookrightarrow S^3$ by $\WDnm$ (this is a specific type of satellite knot, see Figure \ref{fig:Whiteheaddouble}). A formula for the knot Floer homology of $\WDnm$ was exhibited in Theorem $1.2$ of \cite{Doubling}.   This formula was in terms of the filtered chain homotopy  type of the knot Floer homology filtration associated to $K$.  A key step in the proof of the formula was an identification of a particular Floer homology group associated to \WD \ with the direct sum of all the Floer homology groups of the induced knot $(\Surgnm,K')$. Here, \Surg \ denotes the manifold obtained by $p$-surgery on $K$, and $K'$ denotes the knot induced by the surgery i.e. the core of the solid torus glued to $S^3\!-\!K$ in the surgery.  Knowing this identification, we can apply the formula for the Floer homology of the Whitehead double  to calculate the total rank of the Floer homology groups of  $(\Surgnm,K')$.  In the special case that \Surg \ is the lens space $L(p,q)$, \ons \ have an explicit formula (Theorem $1.2$ of \cite{Lens}) which determines the filtered chain homotopy type of the knot Floer filtration of $K$ in terms of the Alexander polynomial of $K$.  Combining these two theorems, Theorem \ref{thm:main} will follow readily.  As with \ons's Theorem, our theorem will handle the more general situation when \Surg \ is an $L$-space, rather than a lens space (recall that an $L$-space is a rational homology sphere  $Y$ for which rk$\ \HFa(Y,\spinc)=1$ for every $\spinc\in\SpinC(Y)$):

\bigskip

\begin{figure}
\begin{center}
\psfrag{N}{t}
\psfrag{N+3}{\small{t+3}}
\psfrag{V}{V}
\psfrag{P}{P}
\psfrag{K}{$\nu K$}
\psfrag{D}{\WD}
\psfrag{1}{1}
\psfrag{f}{f}
\psfrag{+}{+}
\psfrag{=}{=}
\caption{\label{fig:Whiteheaddouble}
The positive $t$-twisted Whitehead double, \WD, of the left-handed trefoil.  Start with a twist knot, $P$, with $t$ full twists embedded in a solid torus, $V$.  The $``+"$ indicates the parity of the clasp of $P$. $f$ identifies $V$ with the neighborhood of $K$, $\nu K$, in such a way that the longitude for $V$ is identified with the Seifert framing of $K$.  The image of $P$ under this identification is \WD. The $3$ extra full twists in the projection of \WD \ shown arise from the writhe of the trefoil, $-3$. }
\includegraphics{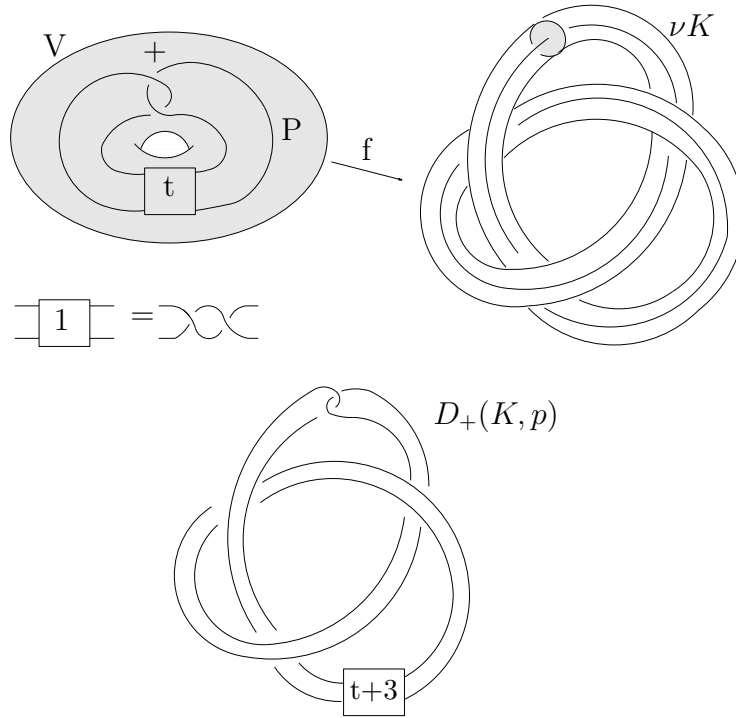}
\end{center}
\end{figure}

\noindent{\bf Theorem \ref{thm:main}} {\em
\	Let $(S^3,K)$ be a knot and suppose that there exists an integer $p>0$ such that  \Surg \ is an $L$-space. Let $(\Surgnm, K')$ be the induced knot.  Then
	\begin{enumerate}
		\item	$p\ge 2g-1$,
		\item  If $p\ge 2g$, then rk$\ \HFKa(\Surgnm,K')= \mathrm{rk}\ \HFa(\Surgnm)=p$,
		\item If $p=2g-1$, then rk$\ \HFKa(\Surgnm,K')=\mathrm{rk}\ \HFa(\Surgnm)+2$.
	\end{enumerate}
	}

	\subsection{Proof of Theorem \ref{thm:main}}
	We begin the details of our proof.    We first note that, by deferring to \cite{KMOS}, we can immediately dispatch with part $(1)$ of the theorem.  Indeed the fact that $p\ge 2g-1$ was proved using monopole Floer homology (with monopole L-space in place of Heegaard Floer L-space) in Corollary $8.5$ of \cite{KMOS} and followed from the algebraic structure of monopole Floer homology together with the existence of a surgery exact sequence.  As the algebraic structure in Heegaard Floer theory is identical and the necessary exact sequence is also in place, the proof in the present context carries over directly. Thus our proof of Theorem \ref{thm:main} will  focuses on parts $(2)$ and $(3)$.  We begin by recalling Theorem $1.2$ of \cite{Doubling}. Throughout the discussion $\F$ will denote the field $\Z/2\Z$, and $\F_{(l)}$ will indicate this same field endowed with Maslov grading $l$.
\begin{theorem}
\label{thm:Meridian}

Let $(S^3,K)$ be a knot and let $(\Surgnm,K')$ be the knot induced in \Surg \ by the core of the surgery torus.  Then
$$\HFDnm \cong \HFMnm,$$

\end{theorem}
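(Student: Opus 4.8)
The plan is to prove Theorem~\ref{thm:Meridian} --- the identification $\HFDnm \cong \HFMnm$ --- and then feed it into the machinery of \cite{Doubling} and \cite{Lens} to deduce Theorem~\ref{thm:main}. I will organize the argument in two halves. The first half establishes the chain-level identification behind Theorem~\ref{thm:Meridian}: the group $\HFKa(D_+(K,p),1)$ in the top (nontrivial) Alexander grading of the $p$-twisted Whitehead double is computed by a Heegaard diagram adapted to the satellite operation, and I claim this diagram manifestly splits as a direct sum, over relative $\SpinC$ structures on $S^3_p(K) - \nu(K')$, of diagrams computing $\HFKa(S^3_p(K),K',\relspinc)$. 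Concretely, one takes a doubly-pointed Heegaard diagram for $(S^3,K)$, forms the standard diagram for the satellite $D_+(K,p)$ by splicing in a pattern diagram for the twist-knot pattern $P$ in the solid torus $V$, and observes that the intersection points of $\Ta\cap\Tb$ representing the top Alexander grading of $D_+(K,p)$ are in bijection with the union over $\relspinc$ of the generators of $\CFKa(S^3_p(K),K',\relspinc)$. The crucial bookkeeping is that the single basepoint pair detecting the Alexander filtration of the Whitehead double ``remembers'' enough of the geometry of the pattern that the winding region around $V$ records precisely the $p$ relative $\SpinC$ structures downstairs; this is the point where the clasp being positive (the ``$+$'' in Figure~\ref{fig:Whiteheaddouble}) and the framing convention $f$ matter. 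I expect this step --- matching generators and, more importantly, matching the differentials and Maslov gradings, so that it is a genuine chain-homotopy equivalence and not merely a rank equality --- to be the main obstacle, and it is largely where one must invoke the detailed constructions of \cite{Doubling}.

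The second half is comparatively formal. Granting Theorem~\ref{thm:Meridian}, I would invoke Theorem~$1.2$ of \cite{Doubling}, which expresses the filtered chain homotopy type of $\CFKa(D_+(K,p),1)$ --- equivalently, by Theorem~\ref{thm:Meridian}, the total rank $\sum_{\relspinc} \mathrm{rk}\,\HFKa(S^3_p(K),K',\relspinc)$ --- in terms of the filtered chain homotopy type of the knot Floer complex of $(S^3,K)$ and the integer $p$. When $S^3_p(K)$ is an $L$-space, Theorem~$1.2$ of \cite{Lens} pins down that filtered chain homotopy type completely: it is determined by the Alexander polynomial of $K$, which in the $L$-space case has the staircase form $\Delta_K(T) = (-1)^k + \sum_{j=1}^k (-1)^{k-j}(T^{n_j} + T^{-n_j})$ for a decreasing sequence of exponents, and the complex $\CFKinf(S^3,K)$ is the corresponding ``staircase complex.'' Plugging this staircase into the \cite{Doubling} formula for the rank of $\HFKa(D_+(K,p),1)$ and simplifying, one reads off that the answer is exactly $p$ precisely when $p \ge 2g(K)$, and is $p+2$ when $p = 2g(K)-1$; the case $p < 2g(K)-1$ is excluded because part~(1), which I take for free from Corollary~$8.5$ of \cite{KMOS}, already rules it out. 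Combined with $\mathrm{rk}\,\HFa(S^3_p(K)) = p$ (true for any $L$-space with $|H_1| = p$, in particular for $L(p,q)$), this yields parts~(2) and~(3) of Theorem~\ref{thm:main}.

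A few remarks on where care is needed. In applying the \cite{Doubling} formula I must track Maslov gradings, since Theorem~\ref{thm:Meridian} is a graded statement and the reduction in rank from ``generic'' knots to $L$-space knots comes from cancellation that only happens once the differential is understood --- so it is not enough to know Betti numbers of the staircase complex, one needs its filtered homotopy type, exactly as \cite{Lens} provides. I should also double-check the genus bookkeeping: the top Alexander grading of $D_+(K,p)$ being the relevant one uses $g(D_+(K,p)) = 1$ for a (nontrivially clasped) twisted double, and the passage between the ``$p$'' labelling the surgery and the ``$p$'' labelling the number of twists in the pattern is exactly the content of the identification $f$ in Figure~\ref{fig:Whiteheaddouble}, namely that the Seifert framing of $K$ is sent to the longitude of $V$. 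Finally, the same proof applies verbatim with $L(p,q)$ replaced by an arbitrary $L$-space, since the only properties used are $\mathrm{rk}\,\HFa = |H_1|$ and the staircase form of the Alexander polynomial forced by \cite{Lens}; this is the generality in which I would state and prove it.
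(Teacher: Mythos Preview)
The paper does not prove Theorem~\ref{thm:Meridian} at all: it is quoted from \cite{Doubling}, where it appeared as a key step in the proof of Theorem~1.2 there. The only content the paper adds is the Remark explaining two cosmetic differences between the statement here and the statement in \cite{Doubling} (writing $K'$ for the surgery core rather than $\mu_K$, and packaging the double sum over $\SpinC$ structures and filtration levels as a single sum over relative $\SpinC$ structures). Your first-half sketch --- splicing a pattern diagram into a doubly-pointed diagram for $(S^3,K)$ and matching generators in the top Alexander grading with generators of $\CFKa(S^3_p(K),K',\relspinc)$ --- is indeed the mechanism behind the result in \cite{Doubling}, and you are right that the delicate part is matching differentials and gradings. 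But since you yourself say this step ``is largely where one must invoke the detailed constructions of \cite{Doubling},'' you and the paper are in the same position: both defer to \cite{Doubling} for the actual proof of the statement in question.

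Your second half is really a proof sketch for Theorem~\ref{thm:main}, not Theorem~\ref{thm:Meridian}. There the strategy matches the paper's, but the paper is more explicit about one step you gloss as ``plugging this staircase into the \cite{Doubling} formula and simplifying.'' The paper isolates this as Proposition~\ref{prop:rank}: for an $L$-space knot, $\mathrm{rk}\,H(\Filt(K,m)) + \mathrm{rk}\,H(\Filt(K,-m-1)) = 1$ for every $m$. It proves this not by directly manipulating the staircase, but via the short exact sequence
\[
0 \to C\{i<0,j=m\} \to C\{\max(i,j-m)=0\} \to C\{i=0,j\le m\} \to 0
\]
together with the large-surgery formula identifying the middle term with $\CFa(S^3_p(K),\spinc_{[m]})$, and a separate Claim (using the Maslov-grading monotonicity from Theorem~\ref{thm:Lens}) that each $H(\Filt(K,m))$ has rank at most $1$. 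Your direct-substitution approach would also work --- the staircase is simple enough that one can compute $H(\Filt(K,m))$ by hand --- but the paper's route via the long exact sequence is slightly cleaner and makes the role of the $L$-space hypothesis (rank one in the middle) transparent.
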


\begin{remark} The statement above differs from  that found in \cite{Doubling} in two ways.  First, the statement presented in \cite{Doubling} expresses the knot $(\Surgnm,K')$  as $(\Surgnm,\mu_K)$, where $\mu_K$ is the meridian of the knot $(S^3,K)$ viewed as knot in \Surg.  However, it is straightforward to see that in \Surg,  $\mu_K$ is isotopic to $K'$ - one simply uses the meridian disc of the surgery torus to perform the isotopy.  Second, the right hand side of the congruence is a sum over relative $\SpinC$ structures, instead of a double sum over absolute $\SpinC$ structures on \Surg \ and filtration levels induced by $\mu_K$.  However, as mentioned in the introduction the filtration  of  $\CFa(Y,\spinc)$ induced by $\mu_K$ is by relative $\SpinC$ structures which $\spinc$ extends, and thus the single sum above is equivalent.
\end{remark}

Next, we have Theorem $1.2$ of \cite{Doubling}. To state it, first recall that associated to $S^3$ is the \os \ ``hat'' chain complex, $\CFa(S^3)$, and that the homology of this chain complex is given by $\HFa(S^3)\cong \F_{(0)}$.  Next, recall that to a knot $(S^3,K)$ \ons \ \cite{Knots} associate a filtered version of the chain complex, $\CFa(S^3)$ (see also \cite{Rasmussen}).  That is, we have an increasing sequence of subcomplexes:
$$ 0=\Filt(K,-i) \hookrightarrow \Filt(K,-i+1)\hookrightarrow \ldots \hookrightarrow \Filt(K,n)=\CFa(S^3).$$
\noindent The filtered chain homotopy type of this filtration is an invariant of the pair, $(S^3,K)$.  We denote the quotient complexes $\frac{\Filt(K,j)}{\Filt(K,j-1)}:=\CFKa(K,j)$.  The homology of these quotients, denoted $\HFKa(K,j)$, are the {\em knot Floer homology groups} of $K$.
As in \cite{FourBall}, we define:
$$\tau(K)=\mathrm{min}\{j\in\Z|i_*:H_*(\Filt(K,j))\longrightarrow H_*(\CFa(S^3))\ ~{\text{~is non-trivial}}\}.$$
\noindent This number is the \os \ concordance invariant which, as its name suggests, has been useful in the study of smooth knot concordance, see \cite{FourBall,Livingston}.  In terms of these invariants we have
\begin{theorem}\label{thm:WD}(Theorem $1.2$ of \cite{Doubling}) Let $(S^3,K)$ be a knot with Seifert genus $g(K)=g$.  Then for $p\ge 2\tau(K)$ we have:
	$$\HFKa(\WDnm,1)\cong \F^{p-2g-2}\ \ \bigoplus_{i=-g}^{g}[H(\Filt(K,i))]^2, $$

\noindent Whereas for $p< 2\tau(K)$ the following holds:

	$$\HFKa(\WDnm,1)\cong 
		\F^{-p+ 4\tau(K)-2g-2} \bigoplus_{i=-g}^{g}[H(\Filt(K,i))]^2.$$

	\end{theorem}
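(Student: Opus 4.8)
The plan is to compute the top Alexander grading of the positive-clasped double directly from a Heegaard diagram adapted to the pattern, reducing the chain complex $\CFKa(\WDnm,1)$ to an explicit model built out of the knot filtration $\Filt(K,\cdot)$ of the companion. Since $\WDnm$ has Seifert genus one, its knot Floer homology is supported in Alexander gradings $-1,0,1$, and the theorem concerns the top grading $1$. First I would fix a doubly-pointed Heegaard diagram $(\Sigma,\alphas,\betas,w,z)$ for $(S^3,K)$ and modify it in a neighborhood of the basepoints to encode the twist-knot pattern inside $\nu K$: the two oppositely-oriented strands of the $2$-cable, the positive clasp, and the $p$ full twists coming from the framing. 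Because the pattern has winding number zero, this modification is supported in a ``winding region'' in which a single attaching curve wraps roughly $p$ times, introducing of order $p$ new intersection points while leaving the remainder of the diagram (and hence the filtered data of $K$) untouched. This is the same mechanism used in the \os \ large-surgery and cabling calculations, and it produces a doubly-pointed diagram for $(S^3,\WDnm)$ whose generators, Maslov and Alexander gradings, and low-area differentials can be read off from those of $K$.

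The heart of the matter is to show that, up to filtered chain homotopy equivalence, the top-grading complex $\CFKa(\WDnm,1)$ splits into two pieces: a ``clasp part'' and a ``twisting part.'' I expect the clasp to contribute, for each filtration level, a pair of copies of the subcomplex $\Filt(K,i)$ — the factor of two reflecting the two strands meeting at the clasp — and these pieces to survive in homology, accounting for the term $\bigoplus_{i=-g}^{g}[H(\Filt(K,i))]^2$ common to both cases of the statement. The $p$ twists, on the other hand, should contribute a long zig-zag of the order-$p$ winding-region generators whose homology is free, and which interacts with the filtration of $K$ only through its two extreme ends (near Alexander gradings $\pm g$) and through the level at which the inclusion-induced map $H_*(\Filt(K,j))\to H_*(\CFa(S^3))=\F$ becomes nontrivial. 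The main technical obstacle is precisely the holomorphic disk count that justifies this splitting: after a suitable finger-move/winding isotopy one must verify that the only differentials among the top-grading generators are the expected ones — the small triangles in the winding region together with the differential of the $K$-filtration — and that no additional disks contribute. Pinning down these counts, and in particular the positivity input coming from the clasp, is where the genuine work lies.

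Granting the splitting, the final formula is a bookkeeping computation organized by the concordance invariant $\tau(K)$. By definition $\tau(K)=\min\{j:H_*(\Filt(K,j))\to H_*(\CFa(S^3)) \text{ is nontrivial}\}$, so the differentials connecting the twisting zig-zag to the top of the filtration ``turn on'' exactly at filtration level $\tau(K)$. Chasing the homology of the winding-region complex through this transition, I would show that its free rank is $|p-2\tau(K)|+2\tau(K)-2g-2$: for $p\ge 2\tau(K)$ this equals $p-2g-2$, while for $p<2\tau(K)$ it equals $-p+4\tau(K)-2g-2$, giving the two displayed cases; the two expressions agree at $p=2\tau(K)$, a convenient consistency check. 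I would then confirm the answer against base cases where everything is explicit — doubles of the unknot, where $g=\tau(K)=0$ and the model predicts $\mathrm{rk}\,\HFKa(\WDnm,1)=p$, matching the known Floer homology of twist knots, and doubles of the trefoil, where $g=\tau(K)=1$ — to be sure the combinatorics of the splitting and of the $\tau$-transition have been assembled correctly.
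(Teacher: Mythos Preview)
The paper does not prove this statement at all: Theorem~\ref{thm:WD} is quoted verbatim from the author's earlier paper \cite{Doubling} and used here as a black box. The surrounding discussion only remarks that the displayed formula is a specialization of a more general result, explains the convention for negative exponents of $\F$, and notes that the proof in \cite{Doubling} passes through the identification of Theorem~\ref{thm:Meridian} (the top Floer group of the double equals the total knot Floer homology of the induced knot $K'\subset S^3_p(K)$). So there is no ``paper's own proof'' to compare your attempt against.

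That said, your sketch is in the right spirit and broadly matches how the result is actually obtained in \cite{Doubling}: one builds a doubly-pointed diagram for the double from one for $K$ by inserting the pattern in a winding region, identifies the generators in the top Alexander grading, and reduces the differential to an explicit model governed by the filtration $\Filt(K,\cdot)$ and the level $\tau(K)$. The honest admission that the holomorphic disk counts justifying the splitting are ``where the genuine work lies'' is exactly right; that is the content of \cite{Doubling}, and nothing in your outline supplies it. One small structural difference: the route in \cite{Doubling}, as summarized in the present paper, proceeds via the surgery identification (Theorem~\ref{thm:Meridian}) together with the large-surgery formula relating $\HFa(S^3_p(K))$ to sub\-quotients of $\CFKinf(S^3,K)$, rather than by a direct count on a diagram for the double; your approach is a more head-on variant of the same idea.
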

	
  Stated above is only the part of Theorem $1.2$ which is relevant for the case at hand, namely the formula for the ``top'' Floer homology group, $\HFKa(\WDnm,1)$. The actual formula is much more general.   Also, we have suppressed here the homological grading of the groups, as we will only be concerned with the ranks.  The astute reader may  question what is meant by a term such as $\F^{-p+4\tau(K)-2g-2}$ this exponent could very well be negative.  By $\F^{-n}$, for instance, we mean the quotient of the remaining group by a subgroup of dimension $n$.

Let us now recall Theorem $1.2$ of \cite{Lens}:

\begin{theorem}
\label{thm:Lens}
	Let $(S^3,K)$ be a knot of Seifert genus $g(K)=g$ and suppose that there exists an integer $p>0$ such that  \Surg \ is an $L$-space. Then there is an
increasing sequence of integers
$$-g=n_{-k}<...<n_k=g$$
with the property that $n_i=-n_{-i}$, and the following significance.
If for $-g\leq i \leq g$ we let
$$\delta_{i}=\left\{\begin{array}{ll}
0 & {\text{if $i=g$}} \\
\delta_{i+1}-2(n_{i+1}-n_{i})+1 &{\text{if $g-i$ is odd}} \\
\delta_{i+1}-1 & {\text{if $g-i>0$ is even,}}
\end{array}\right.$$
then
$\HFKa(K,j)=0$ unless $j=n_i$ for some $i$, in which case
$\HFKa(K,j)\cong \Z$ and it is supported entirely in homological dimension $\delta_i$.
\end{theorem}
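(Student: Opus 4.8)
The plan is to derive all three assertions from the surgery formula for knot Floer homology: the $L$-space hypothesis will be used to force the full complex $\CFKinf(K)$, up to filtered chain homotopy equivalence, into ``staircase'' form, after which each conclusion is read off by inspection. I would first reduce to the case of large $p$ and then restate the hypothesis algebraically. The surgery exact triangle relating $\HFa(S^3)$, $\HFa(S^3_n(K))$ and $\HFa(S^3_{n+1}(K))$ gives $\mathrm{rk}\,\HFa(S^3_{n+1}(K))\le\mathrm{rk}\,\HFa(S^3_n(K))+1$, while $\mathrm{rk}\,\HFa(Y)\ge|H_1(Y;\Z)|$ always, with equality precisely for $L$-spaces; an immediate induction shows that if $S^3_p(K)$ is an $L$-space so is $S^3_{p'}(K)$ for every $p'\ge p$, so I may take $p$ as large as convenient. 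For $p$ large the large surgery formula identifies $\HFp(S^3_p(K),[s])$ with $H_*(A^+_s)$, where, writing $C=\CFKinf(K)$, the complex $A^+_s=C\{\max(i,j-s)\ge 0\}$ is the relevant quotient of $C$ and $\widehat A_s=C\{\max(i,j-s)=0\}$ its ``hat'' version. Since $A^+_s\simeq\CFp(S^3)$ once $|s|\ge g$, only the finitely many $s$ with $-g<s<g$ carry content, and the $L$-space hypothesis becomes exactly
$$H_*(A^+_s)\cong\mathcal T^+\qquad\text{for all }s\in\Z,$$
where $\mathcal T^+=\F[U,U^{-1}]/U\cdot\F[U]$ is the homology of $\CFp(S^3)$ (equivalently, $H_*(\widehat A_s)\cong\F$ for all $s$).

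The heart of the proof is to extract the staircase from this family of rank equalities. Using the short exact sequence
$$0\to C\{i<0,\ j\ge s\}\to A^+_s\to C\{i\ge 0\}\to 0,$$
its transpose under the symmetry of $C$ interchanging the two filtrations, and the identity $H_*(C)\cong\F[U,U^{-1}]$, one argues that $H_*(A^+_s)\cong\mathcal T^+$ for all $s$ can hold only if, after cancelling every differential of $C$ that does not change its filtered chain homotopy type, $C$ is a staircase: freely generated over $\F[U,U^{-1}]$ by $x_0,x_1,\dots,x_{2k}$ whose Alexander gradings from top to bottom form a strictly decreasing sequence $g=n_k>n_{k-1}>\dots>n_{-k}=-g$ with $n_{-i}=-n_i$, with $\partial x_{2\ell-1}=x_{2\ell-2}+x_{2\ell}$ (up to powers of $U$) and the even-indexed generators cycles. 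I would organise this as a downward induction on the genus: $\HFKa(K,g)\ne 0$ by the genus-detection theorem, the $L$-space condition forces $\HFKa(K,g)$ to have rank one (hence $\tau(K)=g$) and forces the reduced complex to begin with one ``step'' linking the top generator to the next generator of $C$ down the staircase; peeling this step off and re-indexing lowers the genus, and the symmetry is inherited from that of $C$.

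Once $C$ is known to be the staircase, all three conclusions follow by inspection. The associated graded of the Alexander filtration is then concentrated, with zero differential, in the gradings $n_i$, so $\HFKa(K,j)$ is a single copy of $\Z$ when $j=n_i$ and is $0$ otherwise; computing Euler characteristics recovers $\Delta_K(T)=(-1)^k+\sum_{i=1}^{k}(-1)^{k-i}(T^{n_i}+T^{-n_i})$, so the $n_i$ are precisely the positions of the nonzero coefficients of the Alexander polynomial. The Maslov grading $\delta_i$ of the generator at $n_i$ is obtained by descending the staircase from its top corner $x_0$, which sits in grading $0$ since it represents the generator of $\HFa(S^3)\cong\F_{(0)}$ (using $\tau(K)=g$): consecutive corners are joined by a single differential, so a ``vertical'' step drops the Maslov grading by $1$, while a ``horizontal'' step of length $n_{i+1}-n_i$ drops it by $2(n_{i+1}-n_i)-1$ — the extra $2(n_{i+1}-n_i)$ being the effect of the power $U^{n_{i+1}-n_i}$ needed to bring that corner into the column where $\HFKa$ is computed — and alternating these two moves down the staircase is exactly the stated recursion for $\delta_i$.

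The only substantial step is the middle one: passing from the (effectively finite) list of identities $H_*(A^+_s)\cong\mathcal T^+$ to the rigid staircase model for $\CFKinf(K)$, and doing the inductive peeling while tracking Maslov gradings. The first step is a formal consequence of the surgery exact triangle and the large surgery formula, and the last is just inspection of the staircase complex.
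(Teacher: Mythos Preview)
The paper does not contain its own proof of this statement: Theorem~\ref{thm:Lens} is quoted verbatim as Theorem~1.2 of \cite{Lens} (Ozsv\'ath--Szab\'o) and used as a black box in the argument for Theorem~\ref{thm:main}. There is therefore nothing in the present paper to compare your proposal against.

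That said, your sketch is a reasonable outline of the Ozsv\'ath--Szab\'o argument, recast in the now-standard ``staircase'' language. The reduction to large $p$ via the surgery triangle, the translation of the $L$-space hypothesis into $H_*(\widehat A_s)\cong\F$ for all $s$, and the final reading-off of the $n_i$ and $\delta_i$ from the staircase are all correct. The step you flag as the substantial one---forcing the reduced $\CFKinf(K)$ into staircase form from the rank constraints on the $A^+_s$---is indeed where the content lies; in \cite{Lens} it is carried out somewhat differently (directly bounding $\mathrm{rk}\,\HFKa(K,j)\le 1$ for each $j$ via short exact sequences rather than by an inductive ``peeling'' of steps), but the two arguments are equivalent in spirit. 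Your inductive peeling is a little underspecified as written, but the idea is sound.
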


We will use Theorem \ref{thm:Lens} together with Theorems \ref{thm:WD} and \ref{thm:Meridian} to deduce Theorem \ref{thm:main}.  

First observe that Theorem \ref{thm:Lens} determines the invariant $\tau(K)$ for knots admitting $L$-space surgeries:
\begin{cor}\label{cor:tau}(Corollary $1.6$ of \cite{Lens}) Let  $(S^3,K)$ be a knot of Seifert genus $g(K)$ and suppose that there exists an integer $p>0$ such that  \Surg \ is an $L$-space. Then $\tau(K)=g(K)$.
\end{cor}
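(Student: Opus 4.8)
The plan is to read off $\tau(K)$ directly from the structure theorem, Theorem~\ref{thm:Lens}, by tracking which subcomplex $\Filt(K,j)$ first carries the generator of $H_*(\CFa(S^3))\cong\F_{(0)}$. Recall that $\tau(K)$ is by definition the least $j$ for which $i_*\colon H_*(\Filt(K,j))\to H_*(\CFa(S^3))$ is nontrivial. Since the top filtration level $\Filt(K,g)=\CFa(S^3)$ (using that $\HFKa(K,j)=0$ for $j>g$, which is part of the content of Theorem~\ref{thm:Lens} together with the fact that $g$ is the Seifert genus, so $g$ is the top nonvanishing Alexander grading), we certainly have $\tau(K)\le g$. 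The inequality $\tau(K)\ge g$ is what requires the structure theorem.

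First I would use the short exact sequence of chain complexes
$$0\longrightarrow \Filt(K,j-1)\longrightarrow \Filt(K,j)\longrightarrow \CFKa(K,j)\longrightarrow 0,$$
whose associated long exact sequence in homology relates $H_*(\Filt(K,j-1))$, $H_*(\Filt(K,j))$ and $\HFKa(K,j)$. By Theorem~\ref{thm:Lens}, for $j$ strictly between consecutive values $n_i$ the quotient $\HFKa(K,j)$ vanishes, so $H_*(\Filt(K,j-1))\cong H_*(\Filt(K,j))$; hence only the levels $j=n_i$ can change the homology, and it suffices to analyze the jumps at those levels. At $j=n_i$ we have $\HFKa(K,n_i)\cong \Z$ supported in homological degree $\delta_i$. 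The key numerical input is that the sequence $\delta_i$ defined in Theorem~\ref{thm:Lens} is \emph{strictly negative} for all $i<k$ (equivalently, for all $n_i<g$): indeed $\delta_k=0$ at $i=k$, and each step from $i+1$ down to $i$ strictly decreases $\delta$ (by $1$ when $g-i$ is even, and by $2(n_{i+1}-n_i)-1\ge 1$ when $g-i$ is odd). Thus for every $i<k$ the generator of $\HFKa(K,n_i)$ lives in negative Maslov grading.

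With this in hand the argument is a downward induction on $i$: I claim $H_*(\Filt(K,n_i))$ is supported entirely in negative Maslov gradings for each $i<k$, hence the map to $H_*(\CFa(S^3))=\F_{(0)}$ (concentrated in degree $0$) is zero for all $j<n_k=g$, giving $\tau(K)\ge g$ and therefore $\tau(K)=g(K)$. The base of the induction is the lowest level, where $\Filt(K,n_{-k})=\CFKa(K,-g)\cong\Z$ in degree $\delta_{-k}<0$. For the inductive step, the long exact sequence gives that $H_*(\Filt(K,n_i))$ is an extension of a subgroup of $\HFKa(K,n_i)$ (degree $\delta_i<0$) by a quotient of $H_*(\Filt(K,n_{i-1}))$ (negative degrees by induction), so it too is concentrated in negative degrees. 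Alternatively one may observe that $H_*(\Filt(K,g-1))$ has a single generator in degree $\delta_{k-1}<0$ by a rank count (the spectral sequence from $\HFKa$ to $\HFa(S^3)$ must kill everything except one class in degree $0$, and by the symmetry $n_i=-n_{-i}$ together with the $\delta$-recursion the cancellations are forced), which already shows $i_*$ is trivial at level $g-1$.

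The main obstacle is the bookkeeping in the induction: one must be careful that the long exact sequence does not allow a class in $\Filt(K,n_{i-1})$ of degree $0$ to survive — but by induction there is no such class, so the only degree-$0$ class in the whole complex appears at the very last level $j=g$. A secondary subtlety is justifying that $\delta_i<0$ for all $i<k$ strictly (not merely $\le 0$); this follows because when $g-i$ is odd the jump is $2(n_{i+1}-n_i)-1$ with $n_{i+1}>n_i$ forcing the jump $\ge 1$, and the parities alternate so one never has two consecutive "even" steps of size $1$ summing to leave $\delta$ at $0$ after leaving $i=k$. Since this is precisely Corollary~$1.6$ of \cite{Lens}, the cleanest exposition is simply to cite it, but the above is the self-contained route.
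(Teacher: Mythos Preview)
Your proposal is correct and rests on the same key observation as the paper: from the recursion in Theorem~\ref{thm:Lens} one sees immediately that $\delta_i$ is a strictly increasing function of $i$ with $\delta_k=0$, so the only knot Floer homology group supported in Maslov grading~$0$ sits at filtration level $g(K)$. The paper's proof is literally that one sentence; your induction via the long exact sequences of the filtration is a correct and more explicit unpacking of the same idea, though more than is needed (and note your ``downward induction'' is actually upward, and the ``secondary subtlety'' paragraph is redundant---you already established each step strictly decreases $\delta$).
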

\begin{proof} This follows immediately from the description of the knot Floer homology groups of $K$ given by Theorem \ref{thm:Lens} and the definition of $\tau(K)$.  In particular, the only knot Floer homology group supported in homological grading $0$ is in filtration grading $g(K)$.
\end{proof}

From this corollary, we can insert $g(K)$ in place of $\tau(K)$ in Theorem \ref{thm:WD} and combine the result with Theorem \ref{thm:Meridian} to yield 
\begin{prop}\label{prop:HFM}
 		Let $(S^3,K)$ be a knot with Seifert genus $g(K)=g$, and suppose that there exists an integer $p>0$ such that  \Surg \ is an $L$-space.  Then for $p\ge 2g$ we have:

$$\HFMnm \cong \F^{p-2g-2}\ \ \bigoplus_{i=-g-1}^{g}[H(\Filt(K,i))\oplus H(\Filt(K,-i-1))], $$

\noindent Whereas for $p< 2g$ the following holds:

$$\HFMnm \cong \F^{-p+2g-2}\ \ \bigoplus_{i=-g-1}^{g}[H(\Filt(K,i))\oplus H(\Filt(K,-i-1))], $$	
	\end{prop}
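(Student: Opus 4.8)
The plan is to assemble Proposition~\ref{prop:HFM} from the three results just recalled, with the hypothesis that $\Surgnm$ is an $L$-space entering only through Corollary~\ref{cor:tau}. That corollary gives $\tau(K)=g(K)=g$, which is precisely what is needed to turn the formula of Theorem~\ref{thm:WD}, stated in terms of the concordance invariant $\tau(K)$, into one depending only on the Seifert genus.

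Concretely, I would first substitute $\tau(K)=g$ into Theorem~\ref{thm:WD}. In the range $p\ge 2\tau(K)=2g$ this leaves the statement $\HFDnm \cong \F^{p-2g-2}\bigoplus_{i=-g}^{g}[H(\Filt(K,i))]^2$ unchanged, while for $p<2\tau(K)=2g$ the exponent $-p+4\tau(K)-2g-2$ collapses to $-p+2g-2$, giving $\HFDnm \cong \F^{-p+2g-2}\bigoplus_{i=-g}^{g}[H(\Filt(K,i))]^2$. (As in Theorem~\ref{thm:WD}, a negative exponent denotes a quotient by a subspace of that dimension; since we only track total rank this is harmless.) I would then apply Theorem~\ref{thm:Meridian} to replace the left-hand side $\HFDnm$ by $\HFMnm$, the sum over relative $\SpinC$ structures of the knot Floer groups of $(\Surgnm,K')$.

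All that remains is cosmetic: rewriting the summation in the symmetric form appearing in the proposition via the identity
$$\bigoplus_{i=-g}^{g}[H(\Filt(K,i))]^2 \ \cong\ \bigoplus_{i=-g-1}^{g}\big[H(\Filt(K,i))\oplus H(\Filt(K,-i-1))\big].$$
This holds because the substitution $i\mapsto -i-1$ carries the index set $\{-g-1,\ldots,g\}$ to itself, so each of the two copies on the right contributes $\bigoplus_{j=-g}^{g}H(\Filt(K,j))$ once the vanishing term $H(\Filt(K,-g-1))=0$ (the filtration being trivial below level $-g$) is discarded. Presenting the sum this way is what makes the conjugation symmetry of the relative $\SpinC$ grading on $(\Surgnm,K')$ transparent; for the rank statements it is immaterial.

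Since every input is an already-established theorem, the proof is a straightforward assembly rather than an argument with a genuine obstruction; the only points demanding attention are checking that the two cases of Theorem~\ref{thm:WD} match the two cases of the proposition after the substitution $\tau(K)=g$, and the (trivial) reindexing above. With Proposition~\ref{prop:HFM} in hand, parts (2) and (3) of Theorem~\ref{thm:main} will follow by plugging in the structure of $\HFa$ for $L$-spaces and computing total ranks, while part (1) is deferred to \cite{KMOS}.
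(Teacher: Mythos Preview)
Your proposal is correct and matches the paper's argument essentially step for step: substitute $\tau(K)=g$ via Corollary~\ref{cor:tau} into Theorem~\ref{thm:WD}, invoke Theorem~\ref{thm:Meridian} to identify the left-hand side with $\HFMnm$, and then reindex the sum using the involution $i\mapsto -i-1$ together with the vanishing $H(\Filt(K,-g-1))=0$. The only cosmetic difference is that the paper explicitly cites the adjunction inequality (Theorem~5.1 of \cite{Knots}) for that vanishing, whereas you appeal to the filtration being trivial below level $-g$; these are the same fact.
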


Note that we have chosen to rewrite the direct sum on the far right of the above formulas in a slightly different form.   
To see the equivalence,  note that the adjunction inequality for knot Floer homology (Theorem $5.1$ of \cite{Knots})  implies $H_*(\Filt(K,j))\cong 0$ whenever $j<-g(K)$.  Thus we have:
$$\bigoplus_{i=-g}^{g}[H(\Filt(K,i))]^2 \cong \bigoplus_{i=-g-1}^{g}[H(\Filt(K,i))]^2.$$
\noindent Now it is easy to rewrite the right hand side as it appears in the proposition:
$$\bigoplus_{i=-g-1}^{g}[H(\Filt(K,i))]^2 = \bigoplus_{i=-g-1}^{g}[H(\Filt(K,i))\oplus H(\Filt(K,-i-1))].$$

\noindent The motivation for the manipulation is the following lemma, 
which shows that the rank of  $[H(\Filt(K,i))\oplus H(\Filt(K,-i-1))]$ must be small for knots admitting $L$-space surgeries.

\begin{prop}\label{prop:rank}
	Let $(S^3,K)$ be a knot, and suppose that there exists an integer $p>0$ such that  \Surg \ is an $L$-space. Then $$\mathrm{rk}\ H(\Filt(K,m)) + \mathrm{rk}\ H(\Filt(K,-m-1))=1, \ \ \mathrm{for \ all\ } m.$$
\end{prop}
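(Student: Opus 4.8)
The plan is to use Theorem~\ref{thm:Lens}, together with $\tau(K)=g$ from Corollary~\ref{cor:tau}, to pin down the filtered chain homotopy type of the Alexander filtration on $\CFa(S^3)$ explicitly, and then to read $H_*(\Filt(K,m))$ off of it directly. By Theorem~\ref{thm:Lens}, $\HFKa(K)$ is one–dimensional in each Alexander grading $n_{-k}<\cdots<n_k$ (with $n_k=g$, $n_{-k}=-g$, $n_{-i}=-n_i$), zero otherwise, and its Maslov gradings $\delta_k>\cdots>\delta_{-k}$ are strictly decreasing. First I would cancel the filtration-preserving part of the differential on $\CFa(S^3)$; this is a filtered chain homotopy equivalence, so it preserves every $H_*(\Filt(K,m))$, and it yields a complex with a basis $\xi_0,\dots,\xi_{2k}$, where $\xi_r$ lies in Alexander grading $n_{k-r}$. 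Its differential strictly drops the Alexander filtration and drops Maslov grading by $1$; since the Maslov gradings of the $\xi_r$ are strictly decreasing integers, this already forces $\partial\xi_r\in\{0,\xi_{r+1}\}$ for each $r$. Now $H_*(\CFa(S^3))\cong\F_{(0)}$ is one–dimensional, and its generator lies in Maslov grading $0=\delta_k$, which among the $\xi_r$ is attained only by $\xi_0$; hence $\xi_0$ must be the unique surviving generator (equivalently $\tau(K)=g$, as in Corollary~\ref{cor:tau}). Combining this with $\partial^2=0$ and the fact that the only generator that can map to $\xi_r$ is $\xi_{r-1}$ forces $\partial\xi_{2l-1}=\xi_{2l}$ for $1\le l\le k$ and $\partial$ zero on the remaining generators — that is, the staircase complex, which is essentially the content of \cite{Lens}.

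Granting this, the rest is bookkeeping with the indices $n_i$. Since $n_k>n_{k-1}>\cdots>n_{-k}$, the subcomplex $\Filt(K,m)$ is spanned by $\xi_{r_0},\dots,\xi_{2k}$, where $r_0=r_0(m)$ is the least $r$ with $n_{k-r}\le m$ (with $r_0=0$ if $m\ge g$ and $r_0=2k+1$ if $m<-g$), and the induced differential still pairs $\xi_{2l-1}$ with $\xi_{2l}$ for every $2l-1\ge r_0$. Hence $H_*(\Filt(K,m))=0$ when $r_0(m)$ is odd or $=2k+1$, while $H_*(\Filt(K,m))\cong\F$, generated by $\xi_{r_0}$, when $r_0(m)$ is even and $\le 2k$; in particular $\mathrm{rk}\,H_*(\Filt(K,m))\in\{0,1\}$ always. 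Finally, writing $r_0(m)=k-s$ with $s$ the largest index satisfying $n_s\le m$, the symmetry $n_{-i}=-n_i$ shows that the largest index $s'$ with $n_{s'}\le -m-1$ is $s'=-s-1$: from $n_s\le m$ one gets $-m-1\le n_{-s}-1<n_{-s}$, and from $m<n_{s+1}$ one gets $-m-1\ge -n_{s+1}=n_{-s-1}$. Thus $r_0(-m-1)=k+s+1$, and since $r_0(m)+r_0(-m-1)=2k+1$ is odd, $r_0(m)$ and $r_0(-m-1)$ have opposite parity — the boundary cases $m\ge g$ and $m\le -g-1$ being checked directly, with the same outcome. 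By the previous line, exactly one of $\mathrm{rk}\,H_*(\Filt(K,m))$ and $\mathrm{rk}\,H_*(\Filt(K,-m-1))$ equals $1$ and the other $0$, which is the assertion.

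The only real obstacle is the first step: showing that Theorem~\ref{thm:Lens} determines the full filtered chain homotopy type of $\CFa(S^3)$ — the staircase — and not merely its associated graded $\HFKa(K)$. This is exactly where the $L$–space hypothesis is used (through $\tau(K)=g$, equivalently through the Maslov grading of the surviving class), and it is the part one must write out carefully; once it is in hand, the remainder is the elementary index combinatorics above, and the manipulation of the direct sum in Proposition~\ref{prop:HFM} then gives the rank count needed for Theorem~\ref{thm:main}.
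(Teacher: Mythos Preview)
Your argument is correct and takes a genuinely different route from the paper's. The paper proceeds via $\CFKinf(S^3,K)$: it identifies $\Filt(K,m)\simeq C\{i=0,j\le m\}$ and, via the $K\leftrightarrow -K$ symmetry, $\Filt(K,-m-1)\simeq C\{i<0,j=m\}$, then fits these into the short exact sequence
\[
0\to C\{i<0,j=m\}\to C\{\max(i,j-m)=0\}\to C\{i=0,j\le m\}\to 0.
\]
The large surgery formula identifies the homology of the middle term with $\HFa(S^3_p(K),\spinc_{[m]})$, which has rank one by the $L$--space hypothesis. The paper then proves separately (using the reduction lemma and a Maslov--grading argument from Theorem~\ref{thm:Lens}) that $\mathrm{rk}\,H(\Filt(K,m))\le 1$, and the long exact sequence finishes it.

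You instead stay entirely at the level of $\CFa$: you pin down the full filtered chain homotopy type as the staircase and read off both ranks directly, using only the symmetry $n_{-i}=-n_i$ to match $r_0(m)$ with $r_0(-m-1)$. This is more elementary---no $\CFKinf$, no surgery formula---but it is also more rigid: the staircase description is special to $L$--space knots, whereas the paper's exact sequence exists for any knot and makes transparent \emph{why} the $L$--space hypothesis (rank one in the middle) is exactly what forces the sum to be one. Your approach does slightly more work up front (the induction forcing $\partial\xi_{2l-1}=\xi_{2l}$), and the paper's does slightly more via machinery; both invoke Theorem~\ref{thm:Lens} and the reduction lemma at the key step.
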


\begin{proof} The proof relies on a Theorem of \ons \ which relates  the Floer homology of \Surg \ to the Floer homology of the filtration induced on the ``infinity'' chain complex of the three-sphere, $\CFinf(S^3)$, by the knot.  More precisely, there is a refined version of the knot Floer homology filtration described above which associates to a knot $(S^3,K)$ a $\Z\oplus\Z$-filtered chain complex, $\CFKinf(S^3,K)$.   Generators of this chain complex correspond to triples, $[\x,i,j]$, where $\x\in \Ta\cap \Tb$ is an intersection point of two ``lagrangian'' tori in the symmetric product of a Heegaard surface for $S^3$, and $i,j\in \Z$ satisfy a homotopy theoretic constraint:
	$$\langle c_1(\relspinc(\x)),[\Sigma]\rangle+2(i-j)=0$$
	\noindent The above constraint is described by the following: \ons \ assign a relative $\SpinC$ structure, $\relspinc\in \SpinC(S^3\!-\!K)$, to $\x \in \Ta\cap \Tb$. In the present case relative $\SpinC$ structures can be canonically identified with $\SpinC$ structures on $S^3_0(K)$ and, as such, have a well-defined first Chern class, $ c_1(\relspinc(\x))$ which can be evaluated on the generator $[\Sigma]$ of $H_2(S^3_0(K);\Z)\cong\Z$. 
 
	By construction, the generators of $\CFKinf(S^3,K)$ admit a map $$\Filt: \CFKinf(S^3,K)\rightarrow \Z\oplus\Z,$$ determined by $\Filt([x,i,j])=(i,j)$.  If we define a partial ordering on $\Z\oplus\Z$ by $(i,j)\le (i',j')$ if $i\le i'$ and $j\le j'$, then $\Filt$ is a filtration i.e. $\Filt(\partial^\infty [x,i,j] )\le \Filt([x,i,j])$, where $\partial^\infty$ is the differential on $\CFKinf(S^3,K)$ (see \cite{HolDisk,Knots} for a definition of this differential). 
 The rich algebraic structure inherent in a doubly filtered chain complex allows one to examine the homology of many subobjects defined by generators whose filtration indices satisfy specific numerical constraints.  For instance, one can define the chain complex:
 $$\Ci{=0} \subset \CFKinf(S^3,K),$$
\noindent consisting of generators of the form $[\x,0,j]$ for some $j\in\Z$. This set naturally inherits a differential from $\CFKinf(S^3,K)$, since it is a subcomplex of the quotient complex $\frac{\CFKinf}{ \Ci{<0}}$.  We have the chain homotopy equivalence of chain complexes $$\Ci{=0} \simeq \CFa(S^3).$$
\noindent  Thus we recover the \os \ ``hat'' complex of $S^3$.  Furthermore, we have:
$$\filtY{m} \simeq \Cij{=0}{\le m},$$
\noindent and by the filtered chain homotopy equivalence between the filtration associated to a knot $K$ and its reverse, $-K$ (Proposition $3.8$ of \cite{Knots}):
$$\filtY{-m-1} \simeq \Cij{<0}{=m}.$$
\noindent (note that this chain homotopy equivalence does not preserve the Maslov grading, but as the proposition does not reference the Maslov grading we do not belabor this point.) Perhaps the most important aspect of  $\CFKinf(S^3,K)$ chain complex is that it determines the Floer homology groups of manifolds obtained by Dehn surgery on $K$.  Indeed, Theorem $4.1$ of \cite{Knots} states that for an appropriate labeling of $\SpinC$ structures on $S^3_p(K)$ by elements $[m]\in \Z/p\Z \leftrightarrow \SpinC(S^3_p(K))$ we have a chain homotopy equivalence,
$$ H_*(C\{\max(i,j-m)=0\})\cong \HFa(S^3_p(K),\spinc_{[m]}),$$
\noindent for all $p\ge 2g(K)-1$.  Now we have the following short exact sequence of chain complexes:

$$
\begin{CD}
	0 @>>>	\Cij{<0}{=m}
	@>{i}>> C\{\max(i,j-m)=0\} @>{p}>> \Cij{=0}{\le m}	  @>>>0
\end{CD}$$

\noindent Which, under the chain homotopy equivalences mentioned above, leads to a long exact sequence:
$$
\begin{CD}
	\ldots @>>> H(\filtY{-m-1})
	@>{i_*}>> \HFa(S^3_p(K),\spinc_{[m]}) @>{p_*}>> H(\filtY{m})	  @>>>\ldots
\end{CD}$$
\noindent  Under the assumption that $S^3_p(K)$ is an L-space, the middle term has rank one for each $m$.   The proposition will now follow from exactness of the above sequence and the following:
\begin{claim} Let $K\hookrightarrow S^3$ be a knot and suppose that $p>0$ surgery on $K$ yields an L-space.  Then
	$$\mathrm{rk}\ H(\filtY{m})\le 1,$$ 
	\noindent for all $m$.
\end{claim}

We prove the claim with the help of Theorem \ref{thm:Lens} above.   To do this, recall that the knot Floer homology groups can be viewed as a filtered chain complex in their own right, endowed with a differential which strictly lowers the filtration grading.  This is made precise by Lemma $4.5$ of \cite{Ras1}, which we restate:
\begin{lemma} \label{lemma:reduction} (Lemma $4.5$ of \cite{Ras1})
Let  $C$ be a filtered complex with filtration 
$$ C_1 \subset C_2 \subset \ldots \subset C_m,$$
\noindent and let $C^i = C_{i}/C_{i-1}$ be the filtered quotients, so that the homology groups $H_*(C^i)$ are the $E_2$
terms of the spectral sequence associated to the filtration. Then, up to filtered chain homotopy equivalence, there is a unique filtered complex $C'$ with the following properties:
\begin{enumerate} 
	\item  $C'$ is filtered chain homotopy equivalent to $C$.
	\item  $(C')^i \cong H_*(C^i)$
	\item The spectral sequence of the filtration on $C'$ has trivial first differential. Its higher
terms are the same as the higher terms of the spectral sequence of the filtration on
$C$.
\end{enumerate}
\end{lemma}
\noindent In the present situation the lemma allows us to replace the filtered chain complex $(\CFKa(K),\partial)$ by $(\HFKa(K),\partial')$.  Here, $\HFKa(K)$ is meant to indicate the direct sum of the knot Floer homology groups of $K$, $\oplus \HFKa(K,i)$.    In light of this, we have the isomorphisms of Maslov graded groups:
$$ H_*(\Filt(K,m) )\cong H_*(\underset{i\le m}\oplus \HFKa(K,i), \partial'),$$
$$ H_*(\HFKa(K),\partial')\cong H_*(\CFKa(K),\partial)\cong \HFa(S^3)\cong \F_{(0)}.$$
 
 \noindent With this algebraic aside behind us, let us return to the proof of the claim. Assume then, that rk$\ H_*(\Filt(K,m))>1$ for some $m$.  Since $S^3_p(K)$ is an $L$-space, Theorem \ref{thm:Lens} indicates that$$ \mathrm{rk}\ \HFKa(K,j)\le 1 \ \mathrm{for\ all\ } j.$$  \noindent Furthermore, the Maslov gradings of the non-trivial groups are a strictly increasing function of the filtration grading, with  maximum Maslov grading $0$.  This, together with our assumption that rk$\ H_*(\Filt(K,m))>1$ implies  the existence of a subgroup $G\subset H_*(\Filt(K,m))$  satisfying  $$G\cong\F_{(i)}\oplus \F_{(k)}, \ \ i<k\le 0.$$ Using again the fact that the Maslov gradings of the non-trivial Floer homology groups are a strictly increasing function of the filtration grading we have $$\underset{j>m}\oplus \HFKa_{i+1}(K,j)\cong 0.$$
 This in turn implies that the summand $\F_{(i)}\subset G$ survives under the induced differential on $\HFKa(K)$ - there are simply no chains in Maslov grading $i+1$ to map to the generator of $\F_{(i)}$.  Survival of the $\F_{(i)}$ summand, however, contradicts the fact that $\HFa(S^3)\cong \F_{(0)}$.  Thus rk$\ H_*(\Filt(K,m)\le 1$ for all $m$ as claimed.
 \end{proof}
 
 We now complete the proof of Theorem \ref{thm:main}.  Propositions \ref{prop:HFM} and \ref{prop:rank} show that for $p\ge 2g(K)$ we have:
 $$\underset{\relspinc}\Sigma \ \mathrm{rk}\ \HFKa(S^3_p(K),K',\relspinc) = p-2g-2+ \overset{g}{\underset{-g-1}\Sigma}\ 1= p = \mathrm{rk}\ \HFa(S^3_p(K)) , $$	
 \noindent whereas for $p=2g(K)-1$ we have
 $$ \underset{\relspinc}\Sigma \ \mathrm{rk}\ \HFKa(S^3_p(K),K',\relspinc) =	2g-2-p + \overset{g}{\underset{-g-1}\Sigma}\ 1$$ $$ \ \ \ \ = 2g-2-p +2g+2 =4g-p = 2g+1 = \mathrm{rk}\ \HFa(S^3_p(K))+2.$$

\section{A Floer homology proof of Berge's theorem}
\label{sec:oneone}
Recall from the introduction that a that a knot $(L(p,q),K')$ is called {\em zero-bridge with respect to the standard genus one Heegaard splitting of $L(p,q)$} if  $K'$ is isotopic to a knot lying in the torus of the splitting.   A knot $(L(p,q),K')$ is called {\em one-bridge with respect to the standard genus one Heegaard splitting of $L(p,q)$} if  $K'$ is isotopic to a knot which intersects each solid torus of the Heegaard splitting in a single unknotted arc.

In this section we will use Theorem \ref{thm:main} to prove the following:

\begin{theorem} \label{thm:isotope} Suppose \kind \ is a one-bridge and that \kind \ admits a three-sphere surgery. Let $K$ be the knot in $S^3$ induced by the surgery, and let $g$ denote its Seifert genus.  Then either \begin{itemize} \item $p>2g-1$
	, in which case \kind \ is simple	
\item  $p=2g(K)-1$ and \kind \ is one of the two knots, $T_R$, $T_L$ specified by Figure \ref{fig:small} (or their reversals).
	\end{itemize}
\end{theorem}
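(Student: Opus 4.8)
The plan is to promote the one-bridge hypothesis to a genus-one doubly-pointed Heegaard diagram, count its generators, and play that count against the rank of $\HFKa(L(p,q),K')$ supplied by Theorem~\ref{thm:main}. Since \kind\ is one-bridge (hence a $(1,1)$-knot), it admits a doubly-pointed genus-one Heegaard diagram $(T^2,\alpha,\beta,w,z)$ in which $(T^2,\alpha,\beta)$ is a genus-one Heegaard diagram for $L(p,q)$ and the basepoints encode the two bridge arcs $t_\alpha,t_\beta$. I would first isotope $\alpha$ and $\beta$ --- never across $w$ or $z$, so that the underlying knot is unchanged --- until $T^2-\alpha-\beta$ contains no bigon disjoint from both basepoints; call the result $\mathcal H$ and set $N=|\alpha\cap\beta|$. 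Because $(T^2,\alpha,\beta)$ presents $L(p,q)$ we have $|[\alpha]\cdot[\beta]|=p$, so $N\ge p$ and $N\equiv p\pmod 2$. By the standard analysis of $(1,1)$-diagrams the differential on $\CFKa(L(p,q),K')$ counts exactly the bigons of $T^2-\alpha-\beta$ disjoint from $w$ and $z$; in $\mathcal H$ there are none, so this differential vanishes and $\mathrm{rk}\,\HFKa(L(p,q),K')=N$.

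\emph{The case $p>2g-1$.} Here Theorem~\ref{thm:main}(2) gives $\mathrm{rk}\,\HFKa(L(p,q),K')=p$, so $N=p$; thus $\alpha$ and $\beta$ meet minimally, and $\mathcal H$ is the standard minimal Heegaard diagram of $L(p,q)$ carrying two basepoints. By the characterization of simple knots recalled in the introduction --- namely that simple knots are precisely those obtained by placing two basepoints on a minimally intersecting Heegaard diagram for $L(p,q)$ --- this means \kind\ is simple in the sense of Definition~\ref{defn:grid1}, which is the first alternative.

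\emph{The case $p=2g-1$.} Now Theorem~\ref{thm:main}(3) gives $\mathrm{rk}\,\HFKa(L(p,q),K')=p+2$, so $N=p+2$: the diagram $\mathcal H$ exceeds minimal position by exactly two intersection points, i.e.\ it is obtained from the standard minimal diagram of $L(p,q)$ by a single finger isotopy producing one bigon $B$. This bigon must contain exactly one of $w,z$: if it contained neither it would be reducible, contradicting the choice of $\mathcal H$; and if $w$ and $z$ lay in a common region (in particular if both lay in $B$) the knot would be trivial, hence simple of rank $p\ne p+2$. This exhibits \kind\ as a member of an explicit finite family of one-bridge knots, indexed by the location of the finger, by which basepoint lies in $B$, and by the region containing the other basepoint. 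To reduce this list to $T_R$, $T_L$ and their reversals I would feed back the $S^3$-surgery hypothesis through Floer homology: the induced knot $K\subset S^3$ has genus $g=(p+1)/2$ and admits an $L$-space surgery, so by Theorem~\ref{thm:Lens} together with Propositions~\ref{prop:HFM} and~\ref{prop:rank} the \emph{relative-$\SpinC$-graded} groups $\HFKa(L(p,q),K',\relspinc)$ are completely rigid (rank one in all but a controlled set of relative gradings). Since $\mathcal H$ computes these graded groups on the nose (its differential vanishes), matching the two descriptions rules out every configuration except the two displayed in Figure~\ref{fig:small} and their reversals.

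\emph{Main obstacle.} The first two steps are routine given Theorem~\ref{thm:main} and the dictionary between minimal diagrams and simple knots; the real work is the case $p=2g-1$: making precise that an $N=p+2$ genus-one diagram is ``standard plus one finger'', enumerating the finitely many resulting knots, and carrying out the graded knot-Floer bookkeeping that isolates $T_R$, $T_L$ (and reversals). A secondary subtlety worth tracking is which of $w,z$ sits in the bigon, since interchanging the two basepoints is precisely what turns a knot into its reverse.
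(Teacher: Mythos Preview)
For $p>2g-1$ your argument is the paper's: isotope away empty bigons so that the genus-one diagram has $N$ generators with vanishing differential, then Theorem~\ref{thm:main}(2) gives $N=p$, hence a minimal diagram, hence a simple knot. The divergence is at $p=2g-1$. You correctly reach a reduced diagram with $N=p+2$, but your plan to finish---enumerate the ``minimal plus one finger'' configurations and eliminate all but $T_R,T_L$ by matching the \emph{relative-$\SpinC$-graded} groups $\HFKa(L(p,q),K',\relspinc)$ against a rigid profile coming from Theorem~\ref{thm:Lens} and Propositions~\ref{prop:HFM}--\ref{prop:rank}---has a real gap: those results control only the \emph{total} rank $\sum_{\relspinc}\mathrm{rk}\,\HFKa(L(p,q),K',\relspinc)$ (Theorem~\ref{thm:Meridian} and Proposition~\ref{prop:HFM} explicitly sum over all $\relspinc$), not the individual graded pieces. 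Since your finite list grows with $p$ (the second basepoint can sit in any of roughly $p$ regions), you have no input to run the elimination on.

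The paper closes this case without any graded bookkeeping, via a trick you are missing. On the same $(p+2)$-generator complex it introduces auxiliary differentials $\partial_z$ and $\partial_w$, counting bigons allowed to cross $z$ (respectively $w$) but not the other basepoint; each computes $\HFa(L(p,q))$ and hence has rank-$p$ homology. The drop from $p+2$ to $p$ forces a bigon $u$ through $z$ (exactly once) missing $w$, and a bigon $v$ through $w$ (exactly once) missing $z$. If the four corners of $u$ and $v$ were all distinct, isotoping across $u$ would produce a $p$-point diagram still containing the bigon $v$, contradicting minimality; so $u$ and $v$ must share a corner, which is precisely the configuration of Figure~\ref{fig:small}. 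This replaces your proposed graded computation (your self-identified ``main obstacle'') with a two-line combinatorial argument.
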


\begin{figure}
\begin{center}
\psfrag{a}{$\partial D_\alpha$}
\psfrag{b}{$\partial D_\beta$}
\psfrag{ta}{$t_\alpha$}
\psfrag{tb}{$t_\beta$}
\psfrag{TL}{$T_L$}
\psfrag{TR}{$T_R$}
\caption{\label{fig:small}
Doubly-pointed Heegaard diagrams specifying the two knots referenced in Theorem \ref{thm:isotope} and Conjecture \ref{conj:simp2}. Shown are $T_R$ and $T_L$ in the lens space $L(7,3)$.  In $S^3$, $T_R$ and $T_L$ are the right- and left-handed trefoils, respectively.  In general, a Heegaard diagram for $T_R$ and $T_L$ is obtained from the minimal diagram of $L(p,q)$ be a simple isotopy which creates $2$ extra intersection points.  }
\includegraphics{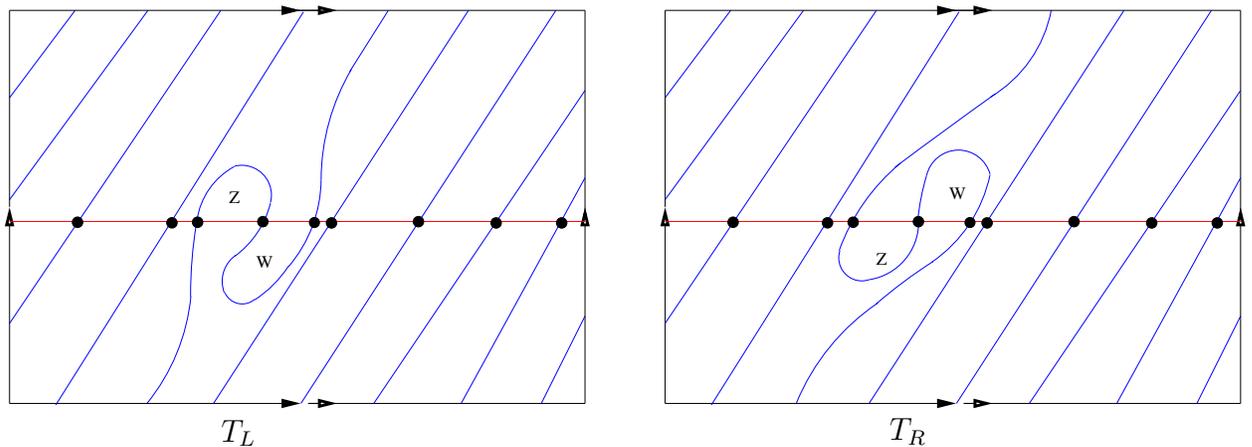}
\end{center}
\end{figure}

We remark that our theorem is more general: it holds for any one-bridge knot with an integral homology sphere L-space surgery.  We further note that from our theorem we immediately recover Berge's theorem (Theorem $2$ of \cite{Berge}) in the case $p>2g-1$.  However, in the case $p=2g-1$ we actually obtain more information; the knot is one of the two knots specified (up to orientation reversal) by Figure \ref{fig:small}.  Of course Berge's theorem tells us that a one-bridge knot in $L(p,q)$ with a three-sphere surgery is simple, independent of $p$.  Thus in the case $p=2g-1$ we obtain
\begin{cor}
	Let \kind \ be a one-bridge knot which admits a three-sphere surgery.  Let $K$ be the knot in $S^3$ induced by the surgery, and $g$ denote its Seifert genus.  Then $p>2g-1$.
\end{cor}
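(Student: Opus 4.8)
The corollary combines two facts that are already in place. First, Berge's Theorem~\ref{thm:berge} (Theorem~2 of \cite{Berge}) tells us that a one-bridge knot \kind\ admitting a three-sphere surgery is automatically simple. Second, Theorem~\ref{thm:isotope} says that when such a knot has $p = 2g(K)-1$, it must be one of the two distinguished knots $T_R$, $T_L$ of Figure~\ref{fig:small} (or their reversals). So the argument is: if $p = 2g-1$ were possible, then \kind\ would be simultaneously simple (by Theorem~\ref{thm:berge}) and equal to $T_R$ or $T_L$ (by Theorem~\ref{thm:isotope}). Hence it suffices to show that $T_R$ and $T_L$ are \emph{not} simple knots in the sense of Definition~\ref{defn:grid1}.

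\textbf{The key step.} The crux is therefore to distinguish $T_R$ (resp.\ $T_L$) from the unique simple knot in its homology class. The natural tool is the bigraded knot Floer homology $\HFKa(L(p,q),K',\relspinc)$. A simple knot $(L(p,q),G)$ has $\HFKa(L(p,q),G) \cong \F^{p}$, with exactly one generator in each relative $\SpinC$ structure, sitting in a prescribed Maslov grading. By contrast, the doubly-pointed diagram of Figure~\ref{fig:small} for $T_R$ has $p+2$ intersection points, and one computes directly from that diagram (it is a very small diagram, obtained from the minimal diagram of $L(p,q)$ by an isotopy creating two extra intersection points) that $\rk\,\HFKa(L(p,q),T_R) = p+2 > p$. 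Since knot Floer homology is an isotopy invariant, $T_R$ cannot be isotopic to any simple knot; the same applies to $T_L$ and the reversals. Alternatively, and perhaps more cleanly, one can invoke Theorem~\ref{thm:main}: a simple knot in $L(p,q)$ has simple Floer homology of rank exactly $p$, so if $T_R$ had a three-sphere surgery with $p = 2g-1$, part~(3) of Theorem~\ref{thm:main} would force $\rk\,\HFKa = p+2$, contradicting simplicity. Either route gives that no one-bridge knot with a three-sphere surgery can have $p = 2g-1$, so $p > 2g-1$.

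\textbf{The main obstacle.} The only real content is verifying $\rk\,\HFKa(L(p,q),T_R) = p+2$ (equivalently, that $T_R$ and $T_L$ genuinely fail to be simple, rather than being isotoped into simple position). This is a direct Heegaard-diagram computation from Figure~\ref{fig:small}: there are $p+2$ generators, and one must check that the differential on $\CFKa$ vanishes (so the rank does not drop), which follows from a disk-counting/index argument on the small diagram together with the fact that the two new generators lie in a common relative $\SpinC$ structure and in Maslov gradings differing by more than one from their neighbors. Once this is established, combining with Theorem~\ref{thm:berge} and Theorem~\ref{thm:isotope} closes the argument.
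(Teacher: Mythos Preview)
Your proposal is correct and follows the same route as the paper: assume $p=2g-1$, invoke Theorem~\ref{thm:isotope} to identify \kind\ with $T_R$ or $T_L$, and then contradict Berge's Theorem~\ref{thm:berge} by observing these are not simple. The paper's proof is the one-line version of exactly this; your ``alternative'' via Theorem~\ref{thm:main}(3) is in fact the cleanest justification that $T_R,T_L$ are not simple (rank $p+2$ versus the rank $p$ forced on any simple knot), so you need not carry out the differential computation you flag as the main obstacle.
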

\begin{proof} If $p=2g-1$, then \kind \ is not simple by the above theorem, contradicting Berge's result. 
\end{proof}
	
To prove our theorem, we first note that since \kind \ admits a three-sphere surgery, the induced knot in $S^3$ has a lens space surgery. Thus Theorem \ref{thm:main} applies and we see that either \kind \ has simple Floer homology or $p=2g-1$ and rk$\ \HFKa(L(p,q),K')=\mathrm{rk}\ \HFa(L(p,q))+2=p+2$.   In light of this, the following proposition implies the theorem stated above.

	\begin{prop} \label{prop:oneone}Let \kind \ be a one-bridge knot. 
		\begin{itemize}\item		If \kind \ has simple Floer homology then \kind \ is simple.
			\item If rk$\ \HFKa(L(p,q),K')=p+2$ then  \kind \ is one of the knots specified by Figure \ref{fig:small} (or their reversals). \end{itemize}
	\end{prop}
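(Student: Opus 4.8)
The plan is to translate the statement into the combinatorics of a genus-one doubly-pointed Heegaard diagram, extract the rank dichotomy by an elementary count of bigons, and then identify the two exceptional knots by classifying the relevant diagrams.

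Since \kind\ is one-bridge, its one-bridge presentation produces a doubly-pointed Heegaard diagram $\mathcal H = (T^2,\alpha,\beta,w,z)$ of genus one, in which $\alpha = \partial D_\alpha$ and $\beta = \partial D_\beta$ are the meridians of the two Heegaard solid tori and $w,z$ are the two points of $K'\cap T^2$, joined through the solid tori by the unknotted arcs $t_\alpha,t_\beta$ of the presentation (the unknottedness being exactly what lets these arcs be isotoped off the respective meridian discs). Among all such diagrams for \kind\ I would fix one for which $n:=\#(\alpha\cap\beta)$ is minimal; since $|[\alpha]\cdot[\beta]| = p$ one has $n\ge p$. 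Taking $\mathcal H$ ``nice'', $\CFKa(\mathcal H)$ has rank $n$, its differential $\partial_K$ counts embedded bigons disjoint from both $w$ and $z$, and the differential $\partial$ computing $\HFa(L(p,q))$ counts the embedded bigons disjoint from $w$ alone. The key observation is that a bigon disjoint from both basepoints would give an isotopy of $\beta$ across it removing two intersection points without moving $w$ or $z$, contradicting minimality of $n$. Hence $\partial_K = 0$ and $\mathrm{rk}\,\HFKa(L(p,q),K') = n$.

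The first bullet is now immediate: if \kind\ has simple Floer homology then $n = \mathrm{rk}\,\HFKa(L(p,q),K') = p$, so $\alpha,\beta$ meet minimally, $\mathcal H$ is a minimal doubly-pointed diagram, and \kind\ is simple (a knot carried by a minimal doubly-pointed diagram is simple in the sense of Definition~\ref{defn:grid1}, by the remark following that definition). For the second bullet, assume $\mathrm{rk}\,\HFKa(L(p,q),K') = p+2$, so $n = p+2$. Removing an innermost bigon of $\alpha\cup\beta$ brings the curves to minimal position, so $\mathcal H$ is the standard diagram $\mathcal H_0$ of $L(p,q)$ with a single finger of one of the curves --- say $\beta$ --- pushed across one edge, creating one extra pair of crossings and a new bigon $B$. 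Since $\HFa(L(p,q)) = \F^p$ is computed by the rank-$n$ complex $(\CFa,\partial)$, we get $\mathrm{rk}\,\partial = 1$, whereas $\mathrm{rk}\,\partial_K = 0$; this forces $B$ to be crossed by $z$ and not by $w$. So $\mathcal H$ arises from $\mathcal H_0$ by fingering $\beta$ over one edge, with $z$ in $B$ and $w$ in one of the remaining regions.

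It remains to classify these diagrams up to the isotopies fixing \kind. The diagram $\mathcal H_0$ has a free cyclic $\Z/p$ symmetry permuting its $p$ crossings and regions, and the finger may be slid, carrying $z$ with it, through any region that does not contain $w$; sliding it maximally brings it up against $w$, which it can approach from one of exactly two sides. I would then check that these two diagrams are exactly those of $T_R$ and $T_L$ in Figure~\ref{fig:small} --- the only remaining freedom being reversal of $K'$ --- and that a finger of $\alpha$ instead of $\beta$ yields nothing new, by the $\alpha\leftrightarrow\beta$ symmetry of $\mathcal H_0$. The rank bookkeeping above is routine; the substantive step, which I expect to be the main obstacle, is this last classification: verifying for all $q$ that the symmetries of $\mathcal H_0$ together with the admissible finger-slides (those not dragging the finger over $w$) reduce the list ``$\mathcal H_0$ plus one $z$-finger plus a choice of $w$-region'' to precisely the two pictures of Figure~\ref{fig:small}.
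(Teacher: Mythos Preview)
Your treatment of the first bullet matches the paper's: minimality of $n$ forces $\partial_K=0$, hence $n=\mathrm{rk}\,\HFKa$, and $n=p$ means the diagram is already the minimal doubly-pointed diagram, so \kind\ is simple.

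For the second bullet your approach diverges from the paper's and has a genuine gap. You assert that the ``minimal plus one finger'' diagram has a single bigon $B$, and that $\mathrm{rk}\,\partial=1$ together with $\partial_K=0$ forces $z\in B$, $w\notin B$. Neither step is justified. A finger move can create more than one embedded bigon: besides the finger tip $B$, one often gets a second bigon between one of the new crossings and an adjacent original crossing. Moreover the differentials count all index-one disks --- equivalently, embedded bigons in the universal cover, which may be immersed in $T^2$ --- not just $B$; so $\mathrm{rk}\,\partial=1$ does not by itself pin down which region carries which basepoint. In fact the symmetric constraint you never invoke --- that $\HFa(L(p,q))$ computed with basepoint $z$ also has rank $p$ --- forces the existence of a \emph{second} bigon, one avoiding $z$ and containing $w$. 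Your single-$B$ picture cannot accommodate this.

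The paper's argument exploits exactly this symmetry. It uses both auxiliary differentials $\partial_z$ (disks avoiding $w$) and $\partial_w$ (disks avoiding $z$), each of rank one, to produce disks $u$ (hitting $z$ once, missing $w$, corners $x,y$) and $v$ (hitting $w$ once, missing $z$, corners $r,s$). A direct count then shows that two of the four corners $x,y,r,s$ must coincide --- otherwise $\#(\alpha\cap\beta)>p+2$ --- and this shared corner forces the diagram into the form of Figure~\ref{fig:small} immediately. The classification of finger-plus-basepoint configurations modulo slides and symmetries, which you flag as the main obstacle, is thus bypassed entirely by bringing the second basepoint into play.
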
 
	\subsection{Knot Floer homology background for one-bridge knots}
	
Before proving the proposition, we collect some basic facts and definitions surrounding the calculation of knot Floer homology for one-bridge knots in lens spaces.  First recall the definition of a compatible doubly-pointed Heegaard diagram for a knot, $(Y,K)$:   

\begin{defn}
\label{def:hd}
A {\em compatible doubly-pointed Heegaard diagram} for a knot $(Y,K)$ (or simply a Heegaard diagram for $(Y,K)$) is a collection of data 
$$(\Sigma,\{\alpha_1,\ldots,\alpha_g\},\{\beta_1,\ldots,\beta_{g}\},w,z),$$
where  
\begin{itemize}

	\item $\Sigma$ is an oriented surface of genus g, the {\em Heegaard surface},

	\item $\{\alpha_1,\ldots,\alpha_g\}$ are pairwise disjoint, linearly
  independent embedded circles (the {\em $\alpha$ attaching circles}) which specify a handlebody, $U_\alpha$,
  bounded by $\Sigma$, 

	\item $\{\beta_1,\ldots,\beta_{g}\}$ are pairwise disjoint, linearly
  independent embedded circles which specify a handlebody, $U_\beta$,
  bounded by $\Sigma$ such that $U_\alpha\cup_{\Sigma}U_\beta$ is
  diffeomorphic to $Y^3$,

	\item  $K$ is isotopic to the union of two arcs joined along their common endpoints $w$ and $z$.  These arcs, $t_{\alpha}$ and $t_{\beta}$, are 
	properly embedded and unknotted in the $\alpha$ and $\beta$-handlebodies, respectively.  

\end{itemize}
\end{defn}

See Figure \ref{fig:small} for an example.  It was first pointed out in Proposition $2.3$ of \cite{Goda2} that knots which are at most one-bridge with respect to the standard Heegaard splitting of $S^3$  are precisely those knots admitting a genus one Heegaard diagram (note that \cite{Goda2} refers to one-bridge knots as $(1,1)$ knots).  Two points are worth mentioning here.  The first is that Proposition $2.3$ of \cite{Goda2} actually states that there is a genus two Heegaard diagram for a one-bridge knot in $S^3$.  However, it follows from Condition (iii) in the statement of Proposition $2.3$ - namely that the boundary of the meridian disc to $K$ (the first $\beta$ attaching curve) intersects exactly one attaching curve for the $\alpha$ handlebody in exactly one point - that after performing handleslides and isotopies of the attaching curves the Heegaard diagram can be destabilized to a genus one diagram.  Indeed, this observation was exploited in \cite{Goda2} throughout the discussion.  The second observation is that Proposition $2.3$ applies more generally to any one-bridge knot in a lens space.  In particular, knots in $L(p,q)$ that are at most one-bridge admit genus one Heegaard diagrams.

In Section $6$ of \cite{Knots}, \ons \ develop a general technique for calculating the Floer homology of any knot admitting a genus one Heegaard diagram (again, this was only explicitly stated for $Y=S^3$ but holds for any lens space).  Very briefly, recall that knot Floer homology was first defined as the ``Lagrangian'' intersection Floer homology for two totally real submanifolds in the $g$-fold symmetric product of the Heegaard surface (with an appropriate almost complex structure).  These totally real submanifolds are defined by the attaching curves of the Heegaard diagram.  However, in the case at hand - when we are dealing with a genus one Heegaard surface - the construction can be described much more concisely.

\begin{figure}
\begin{center}
\psfrag{C}{$\C$}
\psfrag{ea}{$e_\alpha$}
\psfrag{eb}{$e_\beta$}
\psfrag{i}{$i$}
\psfrag{-i}{$-i$}
\psfrag{x}{$x$}
\psfrag{y}{$y$}
\psfrag{u}{$u$}
\psfrag{x'}{$x'$}
\psfrag{y'}{$y'$}
\psfrag{u'}{$u'$}

\caption{\label{fig:disk}
Depiction of disks in $\pi_2(x,y)$ counted in the incidence number $n(x,y)$.  Shown are two maps of disks, $u$ and $u'$.  Both satisfy the boundary conditions necessary to be in $\pi_2(x,y)$ (resp. $\pi_2(x',y')$), and both are orientation preserving.  Because the image of $u'$ has an obtuse corner, it is not counted in the incidence number $n(x',y')$.  }
\includegraphics{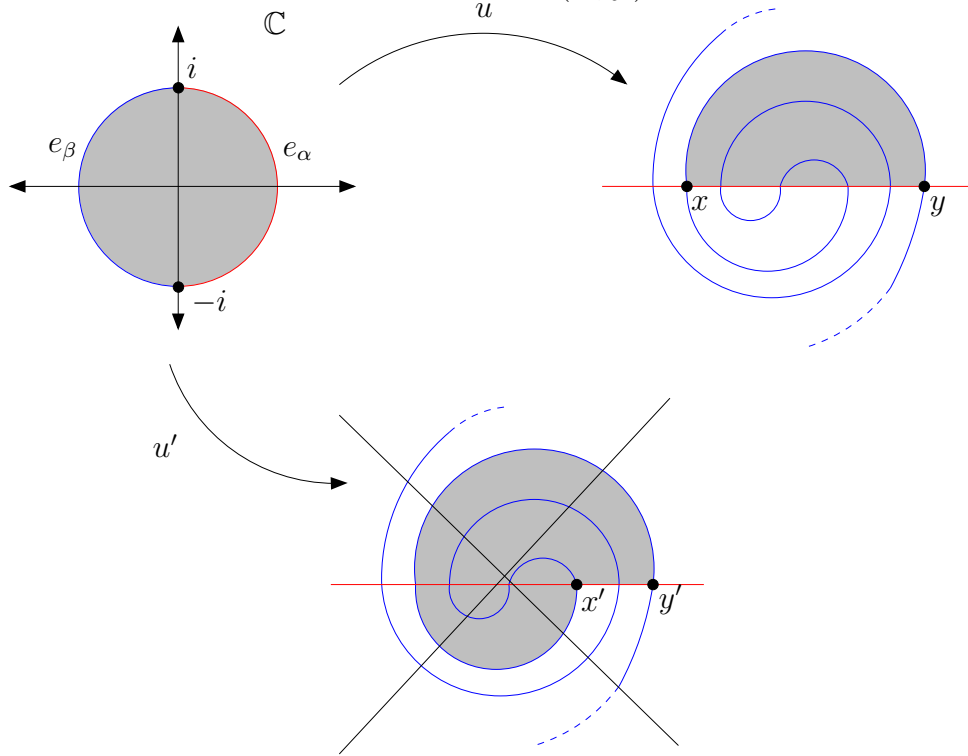}
\end{center}
\end{figure}

Given a  genus one Heegaard diagram for a knot \kind, $$(T^2,\alpha,\beta,w,z),$$ we construct a chain complex $\CFKa(\kindnm)$ as follows.  The generators are intersection points of the attaching curves:
$$\CFKa(\kindnm)=\underset{x\in \alpha\cap \beta}\oplus \F\cm<x>.$$
\noindent Here, for simplicity, we take coefficients in the field with two elements, $\F=\Z/2\Z$.  To define the boundary operator, let us associate an incidence number to $x,y\in \alpha\cap \beta$ as follows.  To begin let $\pi_2(x,y)$ denote the set of homotopy classes of maps of $5$-tuples:
$$u:(\D^2,e_\alpha,e_\beta,-i,i)\rightarrow  (T^2,\alpha,\beta,x,y),$$
\noindent where $\D^2\subset \C$ is the unit disc, $e_{\alpha}$ (resp. $e_{\beta}$) is the part of its boundary with positive (resp. negative) real part, $i=\sqrt{\!-\!1}$, and the right hand side is the Heegaard diagram.  We set 

$$n(x,y)=\left\{\begin{array}{ll} 
	1   &  \mathrm{There\ exists\ an\ orientation}\!-\!\mathrm{preserving \ } u\in \pi_2(x,y)\ \\ 
	&  \mathrm{with\ no\ obtuse\ corners\ and\ } z,w\notin{Im}(u)\\
	0  & \mathrm{otherwise}\\
	
\end{array}\right.$$
	\noindent See Figure \ref{fig:disk} for an explanation of these conditions. In terms of these incidence numbers, the boundary operator can be described by:
	$$\partial x = \underset{y\in\alpha\cap \beta}\Sigma n(x,y)y$$
\noindent The knot Floer homology groups are the homology groups of this chain complex:
$$\HFKa(\kindnm):= H_*(\CFKa(\kindnm),\partial).$$
\noindent For our present purposes, we make no reference to the gradings of this group, but remark that there is a bigrading on the generators of $\CFKa(\kindnm)$ coming from the Maslov index and relative $\SpinC$ structures on $L(p,q)\!-\!K'$.  We will only have need for the total rank of the knot Floer homology groups, summing over both gradings.

\subsection{Proof of Proposition \ref{prop:oneone}}
With the above preliminaries behind us, the Proposition will follow quickly.

\begin{proof} We handle first the case when \kind \ is a one-bridge knot with simple Floer homology.  First observe that simple knots are exactly those knots which can be described by doubly-pointed genus one Heegaard diagrams with minimal intersection number between the $\alpha$ and $\beta$ curves i.e.  \kind \ is simple if and only if it has a genus-one doubly pointed Heegaard diagram with exactly $p$ intersection points.  Now suppose that \kind \ is one-bridge but not simple. Let $(T^2,\alpha,\beta,z,w)$ be a compatible doubly-pointed Heegaard diagram for \kind \ with the fewest number of intersection points $x\in\alpha\cap \beta$.  Since \kind \ is not simple we must have $\underset{\mathrm{geom}}\#|\alpha\cap \beta|>p$.  Thus, if \kind \ has simple Floer homology we have 
	$$\mathrm{rk}\ \CFKa(\kindnm)> \mathrm{rk}\ \HFKa(\kindnm)=p.$$
	In order for the above inequality to hold there must exist a pair of generators, $x,y$, for which $n(x,y)=1$.  This implies the existence of a map from a disc $u$, as above, whose image misses both basepoints $z,w$ defining the Heegaard diagram.  We can use this disc, as in Figure \ref{fig:isotope}, to remove both intersection points (strictly speaking, the isotopy may remove multiple intersection points depending on whether there are other discs present).  In this way we arrive at a Heegaard diagram with strictly fewer intersection points than the one we started with, contradicting our assumption that  $(T^2,\alpha,\beta,z,w)$ was minimal with respect to geometric intersection number.

\begin{figure}
\begin{center}
\caption{\label{fig:isotope} If there exists a non-trivial differential for the chain complex $\CFa(L(p,q,K')$, then the Heegaard diagram for $K$ can be simplified by an isotopy. 
 }
\includegraphics{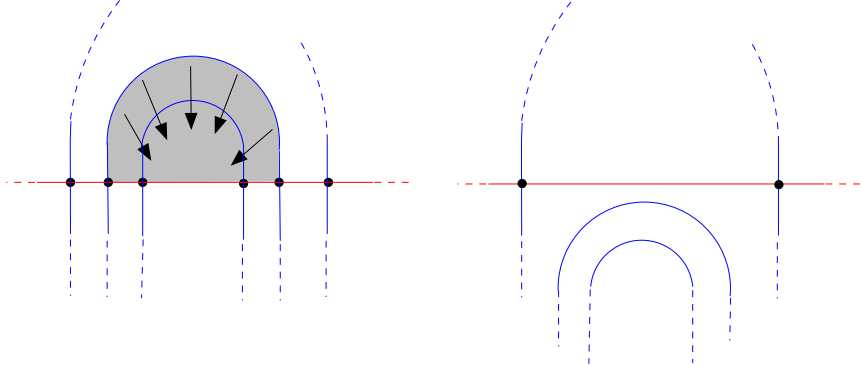}
\end{center}
\end{figure}

	The case when rk$\ \HFKa(L(p,q),K')=p+2$ is only slightly more involved.  As above, let $(T^2,\alpha,\beta,z,w)$ be a compatible doubly-pointed Heegaard diagram for \kind \ with the fewest number of intersection points $x\in\alpha\cap \beta$.  By the isotopy argument described above, we can assume this number to be $p+2$.  That is
	\begin{equation} \label{eq:knot} \mathrm{rk}\ \CFKa(\kindnm)= \mathrm{rk}\ \HFKa(\kindnm)=p+2.\end{equation}
		To show that \kind \ is the knot of Figure \ref{fig:small}, we note that there is a refined incidence number:
		$$n_z(x,y)=\left\{\begin{array}{ll} 
	1   &  \mathrm{There\ exists\ an\ orientation}\!-\!\mathrm{preserving \ } u\in \pi_2(x,y)\ \\ 
	&  \mathrm{with\ no\ obtuse\ corners\ and\ } w\notin{Im}(u)\\
	0  & \mathrm{otherwise}\\
	
\end{array}\right.$$
\noindent and that the operator defined by 
	$$\partial_z x = \underset{y\in\alpha\cap \beta}\Sigma n_z(x,y)y$$
satisfies $\partial_z\circ \partial_z=0$.  Further, the homology of the resulting chain complex satisfies
\begin{equation}\label{eq:Y} \mathrm{rk}\ H_*(\CFKa(\kindnm),\partial_z)=p.\end{equation}
Similar remarks hold if we switch the roles of $z$ and $w$.  That is, there exists an analogous boundary operator $\partial_w$ on $\CFKa(\kindnm)$ whose homology is also of rank $p$.  
Now Equations \eqref{eq:knot},\eqref{eq:Y} imply the existence of intersection points $x,y\in \alpha\cap\beta$, and a disk $u\in \pi_2(x,y)$ which satisfies:
\begin{itemize}\item $u$ is orientation preserving
	\item Im$(u)$ has no obtuse corners
	\item $\underset{alg}\# z\cap \mathrm{Im}(u)=\underset{geom}\# z\cap \mathrm{Im}(u)\ \ge 1$
\end{itemize}
Where the last intersection is the algebraic or geometric intersection number of the image of $u$ with the submanifold $z\hookrightarrow T^2$ (the fact that the two numbers are equal follows from the fact that $u$ is orientation preserving). We claim that in fact, $\# z\cap \mathrm{Im}(u)= 1$.  Indeed, if this were not the case, we could lift the Heegaard diagram and Im$(u)$ to the universal cover of $T^2$ to show that there also exist intersection points $x',y'$ in $\alpha\cap\beta$ and a disk $u\in\pi_2(x',y')$ satisfying $\# z\cap \mathrm{Im}(u)= 1$ (and also the other two conditions itemized above).  This, in turn, implies that $\underset{\mathrm{geom}}\#|\alpha\cap \beta|>p+2$, contradicting our assumption.

Using $\partial_w$, a similar discussion shows the existence of intersection points $r,s\in \alpha\cap\beta$, and a disk $v\in \pi_2(r,s)$ satisfying $\# w\cap \mathrm{Im}(v)= 1$

The proposition now follows from the claim that, of the four intersection points $x,y,r,s$, two must be equal:  for if two of the intersection points are equal, then the existence of the disks $u$ and $v$ force the Heegaard diagram for \kind \ to take the form of Figure \ref{fig:small}.  However, the claim follows from the observation that if none of $x,y,r,s$ are equal,  $\underset{\mathrm{geom}}\#|\alpha\cap \beta|>p+2$.
\end{proof}


\section{Intuition for Conjectures \ref{conj:simp1}-\ref{conj:simp2} and proof of Theorem \ref{thm:baker}}
\label{sec:justify}
The purpose of this Section is to provide some justification for why the conjectures cited in the introduction would be true.  Loosely speaking, Conjecture \ref{conj:simp1} and \ref{conj:simp2} say that simple Floer homology implies simple knot.  Conjecture \ref{conj:characterize} says that simple knots are characterized by their Floer homology. 
 
We first point out that all three conjectures hold in the case where $L(p,q)=S^3$.  This is made precise by the theorems of \ons \ and Ghiggini:
\begin{theorem}\label{thm:S3} (Theorem $1.2$ of \cite{GenusBounds})
Suppose $K \subset S^3$ satisfies $rk(\widehat{HFK}(S^3,K)) = 1$. Then $K$ is the unknot (the only simple knot in $S^3$). 
\end{theorem}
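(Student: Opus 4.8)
The plan is to derive this from the theorem of \ons\ that knot Floer homology detects the Seifert genus. First I would use the symmetry of $\HFKa$ under reversal of the orientation of the knot: the filtered chain homotopy equivalence between the complexes of $K$ and $-K$ (Proposition~$3.8$ of \cite{Knots}) gives $\mathrm{rk}\,\HFKa(S^3,K,j)=\mathrm{rk}\,\HFKa(S^3,K,-j)$ for all $j$. Since the hypothesis is that $\sum_j \mathrm{rk}\,\HFKa(S^3,K,j)=1$, this symmetry forces $\HFKa(S^3,K,j)=0$ for every $j\ne 0$, with $\HFKa(S^3,K,0)\cong\F$; that is, $\HFKa(S^3,K)$ is supported entirely in Alexander grading zero.

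Next I would invoke the genus bound of \ons\ --- the content of Theorem~$1.2$ of \cite{GenusBounds} --- which asserts that for any knot $K\subset S^3$ the largest Alexander grading supporting nonzero $\HFKa$ is exactly the Seifert genus:
\[
g(K)=\max\{\, j\ge 0 : \HFKa(S^3,K,j)\ne 0 \,\}.
\]
Combined with the previous paragraph this forces $g(K)=0$. A genus-zero Seifert surface is an embedded disk, so $K$ is the unknot, which is exactly the assertion of the theorem. The parenthetical remark that the unknot is the unique simple knot in $S^3$ is immediate: by Definition~\ref{defn:grid1} simple knots are indexed by classes in $H_1$, and $H_1(S^3;\Z)=0$ has only the trivial class, realized by the unknot.

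The substance of the argument lies entirely in the genus-detection theorem, which I would treat as a black box, just as the statement of Theorem~\ref{thm:S3} does; its proof combines Gabai's sutured-manifold theory with an identification of the top Alexander grading of $\HFKa$ with a relative version of $\HFa$, and is well outside the scope of the machinery recalled in this paper. One cannot get by with weaker inputs: the adjunction inequality for $\HFKa$ yields only the easy inequality $g(K)\ge\max\{\, j : \HFKa(S^3,K,j)\ne 0\,\}$, and the $\Z\oplus\Z$-filtered arguments used in the proof of Proposition~\ref{prop:rank} are unavailable because they presuppose that $K$ admits an $L$-space surgery, whereas here $K$ is arbitrary. Ghiggini's theorem, cited alongside \ons's just above, plays no role in the rank-one statement itself; it is needed when one wants the analogous $S^3$ statements for the rank-three knots of Conjecture~\ref{conj:simp2}, identifying them with the left- and right-handed trefoils.
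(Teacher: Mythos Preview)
The paper does not supply a proof of this statement at all; it is quoted as an external result (``Theorem~$1.2$ of \cite{GenusBounds}'') and used as evidence for the conjectures. Your derivation is correct and is exactly the standard way to read the rank-one statement off from the genus-detection theorem: the conjugation symmetry forces the single generator into Alexander grading zero, and then $g(K)=\max\{j:\HFKa(S^3,K,j)\ne 0\}$ gives $g(K)=0$. In fact, as you implicitly note, Theorem~$1.2$ of \cite{GenusBounds} \emph{is} the genus-detection theorem, so the paper's Theorem~\ref{thm:S3} is literally the special case you extract rather than a separate statement requiring independent proof.

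Your closing paragraph is also accurate: nothing in the present paper's toolkit (adjunction, the $\Z\oplus\Z$-filtered surgery formula, the $L$-space hypothesis) suffices to prove genus detection, so treating \cite{GenusBounds} as a black box is the only option here, and that is precisely what the paper does.
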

\begin{theorem}\label{thm:trefoil}(Corollary $1.5$ of \cite{Ghiggini})
Suppose $K \subset S^3$ satisfies  $rk(\widehat{HFK}(S^3,K)) = 3$.  Then $K$ is the right- or left-handed trefoil.
\end{theorem}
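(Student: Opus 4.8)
The plan is to reconstruct the bigraded knot Floer homology of $K$ from the rank hypothesis, recognize $K$ as a genus-one fibered knot, and then quote the classical classification of such knots. Throughout write $g=g(K)$ for the Seifert genus; I work with integer coefficients, the rank hypothesis passing between $\F$ and $\Z$ without loss here (universal coefficients force $\HFKa(S^3,K;\Z)$ to be free of rank three).

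First I would pin down the Alexander-graded pieces. By the genus-detection theorem of \ons\ \cite{GenusBounds}, $\HFKa(S^3,K,j)=0$ for $j>g$ while $\HFKa(S^3,K,g)\neq 0$, and the symmetry $\HFKa_d(S^3,K,j)\cong\HFKa_{d-2j}(S^3,K,-j)$ gives $\HFKa(S^3,K,-g)\neq 0$ as well. Since the total rank is three, neither extremal group can have rank $\geq 2$, so each has rank exactly one, and the remaining rank-one generator sits in a grading $j_0$ with $|j_0|<g$; applying the symmetry again forces $j_0=0$. Thus $\HFKa(S^3,K,j)$ has rank one for $j\in\{g,0,-g\}$ and vanishes otherwise, and comparing graded Euler characteristics with the (suitably normalized) Alexander polynomial, using $\Delta_K(1)=1$, yields $\Delta_K(T)=T^g-1+T^{-g}$. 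In particular $\mathrm{rk}\,\HFKa(S^3,K,g)=1$.

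Next I would invoke fiberedness: since the top group has rank one, the theorem that knot Floer homology detects fibered knots — Ghiggini \cite{Ghiggini} for $g=1$ and Ni \cite{YiNi} in general — shows $K$ is fibered of fiber genus $g$. To see $g=1$, suppose $g\geq 2$; then $g-1$ is a nonzero grading distinct from $\pm g$, so the computation above gives $\HFKa(S^3,K,g-1)=0$, contradicting the fact that the next-to-top knot Floer homology group of a fibered knot of genus at least two is nonzero (a fact visible through the relationship between knot Floer homology and sutured Floer homology, reflecting that a fiber surface of genus $\geq 2$ admits no destabilization). Hence $g\leq 1$, and $g=0$ is excluded because the unknot has $\mathrm{rk}\,\HFKa=1\neq 3$. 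So $g=1$. Finally, a genus-one fibered knot in $S^3$ is, classically, the right-handed trefoil, the left-handed trefoil, or the figure-eight knot; the figure-eight is alternating with $\Delta(T)=-T+3-T^{-1}$, so $\mathrm{rk}\,\HFKa(S^3,4_1)=5$ and it is ruled out. Therefore $K$ is the right- or left-handed trefoil.

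The main obstacle is the fiberedness input: passing from ``the top knot Floer group has rank one'' to ``$K$ is fibered'' is precisely the deep theorem of Ghiggini in genus one, whose proof rests on sutured and longitude Floer homology and is in no way formal. A secondary, also nontrivial, ingredient is the non-vanishing of the next-to-top group for higher-genus fibered knots. One might hope to avoid the latter by working with the monodromy — $\Delta_K(T)=T^g-1+T^{-g}$ makes its action on $H_1$ of the fiber have characteristic polynomial $\Phi_6(T^g)$, hence finite order — but this only handles the periodic (torus knot) case directly and leaves the reducible and pseudo-Anosov cases, so the cleanest route keeps the next-to-top non-vanishing statement as a black box.
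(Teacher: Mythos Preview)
The paper does not prove this statement; it is simply quoted from \cite{Ghiggini} as evidence that Conjectures~\ref{conj:simp1}--\ref{conj:characterize} hold in the case $L(p,q)=S^3$. There is therefore no in-paper argument to compare against.

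Your reconstruction is the natural way to derive trefoil detection from the fiberedness-detection machinery, and the outline is correct. Two comments on the details. First, the step ruling out $g\ge 2$ via non-vanishing of $\HFKa(K,g-1)$ for fibered knots is, as you acknowledge, a separate nontrivial theorem; it is due to Baldwin and Vela-Vick and postdates both \cite{Ghiggini} and the present paper. Ghiggini's actual corollary carries the stronger hypothesis that $\HFKa(K)$ agrees with $\HFKa$ of the trefoil as a bigraded group, so that $g(K)=1$ is built into the assumption and the $g\ge 2$ case never arises; the rank-only formulation quoted here is marginally stronger and genuinely needs the extra input you flag. Second, your parenthetical on coefficients is not quite right: rank $3$ over $\F=\Z/2\Z$ forces $\HFKa(K;\Z)$ to have no $2$-torsion, but says nothing about odd torsion, so ``free of rank three'' over $\Z$ does not follow from universal coefficients alone. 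This is harmless in the end, since genus detection and fiberedness detection both go through over $\F$ and your argument can be run entirely there, but the justification should be adjusted.
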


For knots in a general lens space, perhaps the most compelling quantitative evidence at the moment is Theorem \ref{thm:baker}, which says that Conjectures \ref{conj:simp1} and \ref{conj:simp2} hold for knots whose complements have somewhat simple topology.  

\bigskip
\noindent {\bf Proof of Theorem \ref{thm:baker}.}  Suppose that \kind \ is a knot whose homology class generates $H_1(L(p,q);\Z)$ and for which  \begin{equation} \label{eq:genus} g(K')\le \frac{p+1}{4}.\end{equation} Theorem $1.1$ of \cite{Baker1} shows that \kind \ is one-bridge.  Now Proposition \ref{prop:oneone} applies and shows that if \kind \ has simple Floer homology then  \kind \ is simple.  Strictly speaking, we should also address the case when rk$\ \HFKa(L(p,q),K')=p+2$. However, in this case any knot satisfying the genus constraint would be one-bridge and, by Proposition \ref{prop:oneone}, would be the knot of Figure \ref{fig:small}.  However, it can be shown (using, for example, results of Ni \cite{YiNi}) that this knot does not possess surfaces in its complement which satisfy the genus constraint.   $\square$

\bigskip
For knots in $L(p,q)$ which satisfy Equation \eqref{eq:genus}, one can also prove Conjecture \ref{conj:characterize}.  To describe the method by which this is done, assume that we are given a simple knot $(L(p,q),K')$ satisfying \eqref{eq:genus}.  The calculation of the bigraded Floer homology groups of any simple knot is straightforward. Now a theorem of Ni \cite{YiNi} shows that the breadth of the homology support in the Alexander grading determines the genus of any knot $(L(p,q),J)$.  Thus, if another knot $(L(p,q),J)$ had the same Floer homology as a simple knot satisfying \eqref{eq:genus}, $J$ would necessarily satisfy \eqref{eq:genus} as well.  By Baker's theorem, this would imply that $J$ is one-bridge and Proposition \ref{prop:oneone} would imply that $J$ is simple.  Then one can check that there is a unique simple knot in $L(p,q)$ with the Floer homology of $(L(p,q),K')$.  We postpone the details of this argument for a later time, but suffice it to say that it is straightforward to find infinite families of simple knots (in different lens spaces) of arbitrarily large genus which are characterized by their knot Floer homology.

\bigskip
In another direction, the recent connection between knot Floer homology and grid diagrams \cite{MOS} also provides compelling evidence for Conjecture $3$.  In \cite{MOS} it was shown that the Floer homology of knots in $S^3$ can be combinatorially computed from a certain (grid) diagram associated to a particular (grid) projection of $K$.  The extension of this combinatorial formula  to knots in lens spaces is discussed  in \cite{BGH}.  There, a similar formula to that for knots in $S^3$ is presented which computes the knot Floer homology of an arbitrary knot $(L(p,q),K')$.  This formula, too, is in terms of a grid diagram for \kind.  To date, there are no combinatorial proofs of Theorems \ref{thm:S3} and \ref{thm:trefoil}.  The existing proofs rely on connections between \os \ theory and symplectic geometry.  However, it seems reasonable to expect that if either or both of these theorems could be understood combinatorially,  then the proofs could be adapted to the more general setting of simple knots in lens spaces.  Indeed, the combinatorics of grid diagrams for knots in lens spaces is completely analogous to that of knots in the three-sphere \cite{BGH}.

\commentable{
\bibliographystyle{plain}
\bibliography{biblio}
}

\end{document}